\newtheorem{theorem}{Theorem}
\newtheorem{lemma}{Lemma}
\newtheorem{cor}{Corollary}
\newtheorem{prop}[lemma]{Proposition}
\newtheorem{question}{Open Question}
\def\C{{\mathbb C}}
\def\M{{\mathbb M}}
\def\N{{\mathbb N}}
\def\R{{\mathbb R}}
\def\Ker{{\mathrm{Ker}}}
\def\Gal{{\mathrm{Gal}}}
\def\\{\cr}
\def\({\left(}
\def\){\right)}
\def\[{\left[}
\def\]{\right]}
\def\<{\langle}
\def\>{\rangle}
\def\fl#1{\left\lfloor#1\right\rfloor}
\newcommand{\ignor}[1]{}
\def\Mat{\mathop{Mat}}
\def\gcd{\mathop{gcd}}
\def\rank{\mathop{rank}}
\def\span{\mathop{span}}
\def\Re{\mathop{Re}}
\def\1{\mathbf{1}}
\begin{document}
\title{Balanced $0,1$-words and the Galois group of $(x+1)^n-\lambda x^p$}
\author{L.Glebsky}
\maketitle  
\begin{abstract}
We study the number of $0,1$-words where the fraction of $0$ is ``almost'' fixed for
any initial subword. It turns out that this study use and reveal the structure of the 
Galois group (the monodromy group) of the polynomials  $(x+1)^n-\lambda x^p$. ($p$ is not 
necessary a prime here.)  
\end{abstract}
\section{Introduction and formulation of the main results}\label{Sec.introduction}

We need some notations. Let $w\in \{0,1\}^*$. By $|w|$ we denote the length of 
$w$ ($|w_1w_2\dots w_n|=n$). By $|w|_0$ we denote the number of zeros in $w$
($|w_1w_2\dots w_n|_0=|\{i\;:\;w_i=0\}|$). Similarly, $|w|_1$ is the number of $1$
in $w$. For $k\in\N$, $k\leq |w|$ let $w[:k]=w_1w_2\dots w_k$ be the prefix of
length $k$. Generally, $w[l:k]=w_l,w_{l+1},\dots,w_k$.  
For $n,r\in\N$, $0\leq\alpha\leq 1$ let 
$$
B_{n,\alpha,r}=
\{w\in\{0,1\}^n\;\;:\;\;\forall k\leq n\;\;\alpha k-r<\left|\;w[:k]\;\right|_0
\leq\alpha k+r \}.
$$
The condition $\alpha k-r<|\,w[:k]\,|_0\leq \alpha k+r$ is equivalent to the condition 
$|\,w[:k]\,|_0-\fl{\alpha k}\in\{-r+1,-r+2,\dots,r\}$. 
We often use the last conditions as more manageable.
The elements of $B_{n,\alpha,r}$ is said to be $(\alpha,r)$-balanced words of length $n$.
We are interesting in $|B_{n,\alpha,r}|$, or, precisely, in growth exponent
\begin{equation}\label{Eq.limit}
e_{\alpha,r}=\lim_{n\to\infty} \sqrt[n]{|B_{n,\alpha,r}|}.
\end{equation}
In Section~\ref{Sec.rational} we calculate $B_{n,\alpha,r}$ for rational $\alpha$. It implies 
the existence of the limit (\ref{Eq.limit}) for rational $\alpha$. 
In Section~\ref{Sec.limit} we prove that the limit exists for all $\alpha\in (0,1)$ and 
that $e_{\alpha,r}$ 
is continuous in $\alpha$, uniformly with
respect to $r$.

Let us define 
$$
\tilde B_{n,\alpha,r}=
\{w\in\{0,1\}^n\;\;:\;\;\alpha n-r<|w|_0\leq\alpha n+r \}.
$$
It follows from study of generating function (as in 
\cite{P1,P2,Pemantle_preprint}) that
the growth exponent
$$
\tilde e_{\alpha}=\lim_{n\to\infty} \sqrt[n]{|\tilde B_{n,\alpha,r}|}.
$$
is independent of $r$ and 
$\tilde e_\alpha=\left(\frac{1}{\alpha}\right)^\alpha\left(\frac{1}{1-\alpha}\right)^{1-\alpha}$.
It is obvious that $B_{n,\alpha,r}\subset \tilde B_{n,\alpha,r}$ and 
\begin{equation}\label{ineq1}
e_{\alpha,r}\leq\tilde e_\alpha.
\end{equation}
In the paper we show by calculation that 
\begin{equation}\label{Eq.limit2}
\lim\limits_{r\to\infty}e_{\alpha,r}=\tilde e_\alpha 
=\left(\frac{1}{\alpha}\right)^\alpha\left(\frac{1}{1-\alpha}\right)^{1-\alpha}.
\end{equation}
 I was not able to find direct 
combinatorial argument for this limit.

It is interesting that there is a  relation of our results with Galois group 
$\Gal(P,\C(\lambda))$ of
polynomial $P=(x+1)^n-\lambda x^p$.  
In order to establish Limit~(\ref{Eq.limit2}) we use the fact that  
$\Gal(P,\C(\lambda))$ contains a cyclic permutation of length $n$.
Then, using the combinatorial inequality (\ref{ineq1}), we show that
$\Gal(P,\C(\lambda))=S_n$ for relatively prime $n$ and $p$. Let us finish the introduction
by repeating  the main results of the paper:
\begin{itemize}
\item The convergence of r.h.s of Eq.\ref{Eq.limit} is proved.
\item  The limit of Eq.\ref{Eq.limit2} is proved.
\item The equality $\Gal(P,\C(\lambda))=S_n$ is proved for relatively prime $n$ and $p$.
If $\gcd(n,p)=k$ then $(S_{n/t})^t$ is a normal subgroup of $\Gal(P,\C(\lambda))$ and
$\Gal(P,\C(\lambda))/(S_{n/t})^t$ is a cyclic group of order $t$.
\end{itemize}  

The questions discussed in the paper appear during the investigation of directional 
complexity and entropy for lift mappings initiated by V. Afraimovich and M. Courbage.
The author is thankful to them for the problem and useful discussions. As a reader
may see we don't touch the lift mappings in the present paper for it will be discussed
in \cite{Afr}. I believe also that the combinatorial problem is interesting for its own sake.

\section{Estimation of $|B_{n,\alpha,r}|$ for rational $\alpha$}\label{Sec.rational}
   
In order to calculate $|B_{n,\alpha,r}|$ we define a vector
$b(n)=(b_{1},b_{2},\dots,b_{2r})^t\in\R^{2r}$, 
$b_j(n)=|\{w\in B_{n,\alpha,r}\;\;:\;\; |w|_0-\fl{\alpha n}=j-r\}|$. Clearly, 
$|B_{n,\alpha,r}|=\sum\limits_j b_j(n)$ and $b_r(0)=1$, $b_j(0)=0$ for $j\neq r$.
Let
$$
N_+=\left( \begin{array}{ccccc}
          0 & 0 & \dots & \dots & 0\\
          1 & 0 & \dots & \dots & 0\\
          0 & 1 &  0    & \dots & 0\\
       \dots&\dots & \dots& \dots & \dots \\ 
       \dots&\dots & \dots& \dots & \dots \\ 
       \dots&\dots & \dots& 1 & 0\\
    \end{array}
   \right),\;\;\;N_-=N_+^t 
$$
be a low and upper $0$-Jordan cells and $E$ be the unit matrix. 
One may check that $b(n+1)=(E+N_+)b(n)$, if  $\fl{\alpha (n+1)}=\fl{\alpha n}$, and
$b(n+1)=(E+N_-)b(n)$, if  $\fl{\alpha (n+1)}=\fl{\alpha n}+1$.
So, at list for a rational $\alpha=p/q$ the problem of finding $e_{p/q,r}$ may be reduced 
to finding the maximal eigenvalue $e_{\max}$ of $M_{p+q}$, where 
$M_0=E$ and $M_{n+1}=(E+N_+)M_n$, if  $\fl{\alpha (n+1)}=\fl{\alpha n}$, and
$M_{n+1}=(E+N_-)M_n$, if  $\fl{\alpha (n+1)}=\fl{\alpha n}+1$. 
Notice, that the matrix $M_{p+q}$ is nonnegative irreducible primitive matrix\footnote{
It is an oscillation matrix \cite{Gant} and all its eigen-values are simple and positive}
(if $p,q\neq 0$). So, 
the Perron–-Frobenius theorem implies that 
\begin{equation}\label{Eq.value}
e_{\max}=e_{\alpha,r}^{p+q}.
\end{equation}

Let $e_1=(1,0,\dots,0)^t$, $e_2=(0,1,\dots,0)^t$,...,$e_j=(0,\dots,0,1,0,\dots)^t$ (the one on 
$j$-th place). One can check that $N_- e_j=e_{j-1}$, for $j>1$ and $N_+ e_j=e_{j+1}$, for
$j<2r$. Now notice, that for $p<j\leq 2r-q$ one has  
$N_-^{p_1}N_+^{q_1}\dots N_-^{p_k}N_+^{q_k}e_{j}=e_{j-p+q}$, where 
$p=p_1+p_2+\dots+p_k$ and $q=q_1+\dots+q_k$. So, the result is independent of the exact order
$N_-$ and $N_+$ (the matrices $N_-$ and $N_+$ almost commute in some sense.)    
We know that
\begin{equation}\label{Eq.M1}
M_{p+q}=
(E+N_-)^{p_1}(E+N_+)^{q_1}\dots (E+N_-)^{p_k}(E+N_+)^{q_k}.
\end{equation}
It follows that for 
$p<j\leq 2r-q$ the coefficients $M_{k,j}$  depend only on 
$p=p_1+\dots+p_k$ and $q=q_1+\dots+q_k$. 
Notice that $M_{k,j}=0$ if 
$j\not\in\{k-q,\dots,k+p\}$. Put $n=p+q$.   
So, there exist two submatrices $S_u\in \Mat_{q\times n}$ and $S_d\in\Mat_{p\times n}$ whose
coefficients does depend on the exact order of the multipliers in Eq.\ref{Eq.M1}.
A simple calculation shows $M_{k,j}=C^{p+q}_{k-j+p}$ if 
$(k,j)\not\in \{1,\dots,q\}\times\{1,\dots,n\}\cup \{2r-p,\dots,2r\}\times\{2r-n,\dots,2r\}$.
So the matrix $M$ looks like 
\def\M0{ \begin{array}{ccc} 0& \dots & 0\\
                                                 \dots   & \dots & \dots\\
                                                   0     &\dots  & 0
                                      \end{array} }
$$
\left( \begin{array}{ccccc} S_u & \dots & 0 &\dots & \M0 \\
                          \M0 &  \dots & C^{p+q}_{k-j+p} &\dots & \M0 \\
                          \M0 & \dots &  0 & \dots & S_d
\end{array}\right)
$$
Recall that the matrix $M$ depends on $p,q$ and $r$ and $M\in\Mat_{2r\times 2r}$. Some times we write this dependence explicitly 
(for example, $M(r)$ ). Recall that $n=p+q$.
\begin{theorem}\label{Th.spectrum}
Let $0\leq\lambda_1<\lambda_2\leq \frac{n^n}{p^pq^q}$. Then the number of spectral points
of $M(r)$ on $(\lambda_1,\lambda_2)$ goes to infinity as $r\to\infty$.
\end{theorem}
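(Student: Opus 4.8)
The plan is to exploit the explicit combinatorial description of $|B_{n,\alpha,r}|$ and the relation $e_{\max} = e_{\alpha,r}^{\,p+q}$ together with the convergence $\lim_{r\to\infty} e_{\alpha,r} = \tilde e_\alpha = \frac{n^n}{p^p q^q}$ reinterpreted at scale $p+q$. Observe first that $\tilde e_\alpha^{\,n} = \frac{1}{\alpha^{\alpha n}(1-\alpha)^{(1-\alpha)n}} = \frac{n^n}{p^p q^q}$ when $\alpha = p/q$ in lowest terms and $n = p+q$, so the upper endpoint $\frac{n^n}{p^p q^q}$ is precisely $\lim_{r\to\infty}(e_{\alpha,r})^{p+q} = \lim_{r\to\infty} e_{\max}(r)$, the limit of the Perron eigenvalue of $M(r)$. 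This identifies the target interval's right end as the supremum of the top eigenvalues, so the assertion is really that the \emph{whole} spectrum fills up $(0, \tilde e_\alpha^{\,n})$ densely as $r$ grows.

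The key structural input is the footnote: $M(r)$ is an oscillation matrix in the sense of Gantmacher, hence all its $2r$ eigenvalues are simple, real, and positive. So $M(r)$ has $2r$ distinct positive eigenvalues $0 < \mu_1^{(r)} < \mu_2^{(r)} < \dots < \mu_{2r}^{(r)} = e_{\max}(r)$. The first step is to pin down $\mu_{2r}^{(r)} = e_{\max}(r)$: by Eq.~(\ref{Eq.value}) it equals $e_{\alpha,r}^{\,p+q}$, which increases to $\frac{n^n}{p^pq^q}$ as $r\to\infty$ (monotonicity because $B_{n,\alpha,r} \subseteq B_{n,\alpha,r+1}$ after re-indexing, and the limit by Eq.~(\ref{Eq.limit2})). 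Second, I would obtain a \emph{lower} bound on how many eigenvalues sit above a given threshold $\lambda_1$. Writing $|B_{n,\alpha,r}| = \mathbf{1}^t M(r)^{n/(p+q)} b(0)$ roughly (i.e.\ $\sum_j b_j(n)$ is, up to a fixed polynomial factor coming from $S_u, S_d$, the entrywise sum of a power of $M(r)$), a trace/moment computation gives $\mathrm{trace}\, M(r)^m = \sum_{i=1}^{2r} (\mu_i^{(r)})^m$. If only boundedly many eigenvalues exceeded $\lambda_1$, then for large $m$ the trace would be $O\big((\mu_{2r}^{(r)})^m\big)$ with a bounded number of terms, whereas a direct count of closed "balanced" paths of length $m(p+q)$ — equivalently, of balanced words that return to the same offset — should grow like $c(r)\cdot (\text{something approaching } \tfrac{n^n}{p^pq^q})^m$ with $c(r)\to\infty$; comparing the two forces the number of large eigenvalues to diverge.

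More robustly, I would argue by interlacing. The matrix $M(r+1)$ contains $M(r)$ as a principal-type submatrix up to the border blocks $S_u, S_d$: deleting the outermost rows/columns of the $2(r+1)$-dimensional state space recovers the recursion on the inner $2r$ coordinates, because the middle block entries $C^{p+q}_{k-j+p}$ are independent of $r$. Cauchy interlacing (valid here since oscillation matrices are diagonalizable with real spectrum, and one can symmetrize by a positive diagonal conjugation) then gives $\mu_i^{(r)} \le \mu_{i+2}^{(r+1)}$, so for each fixed $i$ the sequence $r \mapsto \mu_{i}^{(r+?)}$ is monotone and the spectra nest. Combined with $\mu_{2r}^{(r)} \uparrow \frac{n^n}{p^pq^q}$, a pigeonhole/compactness argument shows the limiting set of eigenvalues is a subset of $[0,\frac{n^n}{p^pq^q}]$ with $\frac{n^n}{p^pq^q}$ as an accumulation point from below, and then that \emph{every} point of $(\lambda_1,\lambda_2)$ is approached: otherwise a gap in the limiting spectrum would persist, contradicting that the top eigenvalue can be made to fall inside $(\lambda_1,\lambda_2)$ for suitable intermediate $r$ while the count of eigenvalues below it is unbounded.

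The main obstacle is making the interlacing rigorous: one must check that passing from $M(r)$ to $M(r+1)$ really is a bordered-matrix extension in a basis where the matrix is symmetric (or at least that an appropriate positive diagonal conjugation $D M(r) D^{-1}$ is symmetric and compatible across $r$), so that Cauchy's interlacing theorem applies cleanly; the non-symmetric "almost commuting" structure of $N_+, N_-$ and the asymmetric corner blocks $S_u$ (size $q$) versus $S_d$ (size $p$) make this bookkeeping delicate. An alternative route around this obstacle is purely analytic: use that $e_{\alpha,r}$ is continuous in $\alpha$ uniformly in $r$ (proved in Section~\ref{Sec.limit}) to transfer the density statement from nearby rationals, or use a direct variational lower bound $\mu_k^{(r)} \ge \sup_{\dim V = k} \inf_{v\in V} \frac{v^t M(r) v}{v^t v}$ with explicit test subspaces built from truncated "balanced-word" generating vectors tuned to eigenvalue $\approx \lambda$. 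Either way, once the count of eigenvalues above any $\lambda_1 < \frac{n^n}{p^pq^q}$ is shown to diverge and the top eigenvalue is shown to converge to $\frac{n^n}{p^pq^q}$, simplicity of the spectrum of the oscillation matrix $M(r)$ forces these eigenvalues to spread out and hence to become dense in $(\lambda_1,\lambda_2)$.
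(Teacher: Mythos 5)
Your proposal has a fatal circularity and, independently, a wrong key step. The circularity: you take as input that $e_{\max}(r)=e_{\alpha,r}^{\,p+q}$ increases to $\frac{n^n}{p^pq^q}$, citing Eq.~(\ref{Eq.limit2}). But in the paper that limit is a \emph{consequence} of Theorem~\ref{Th.spectrum} (see the Corollary immediately after it); a priori only the upper bound $e_{\max}(r)\le \frac{n^n}{p^pq^q}$ is available from Inequality~(\ref{ineq1}), and the introduction states explicitly that no direct combinatorial argument for the limit is known. Your trace alternative has the same problem: the claim that the count of closed balanced paths grows like $c(r)\cdot\lambda^m$ with $\lambda$ approaching $\frac{n^n}{p^pq^q}$ and $c(r)\to\infty$ is exactly the hard content of the theorem, not an input to it. The independent error: even granting that the top eigenvalue tends to $\frac{n^n}{p^pq^q}$ and that all $2r$ eigenvalues are simple (the oscillation-matrix footnote), simplicity does not force eigenvalues to "spread out and become dense." Distinct eigenvalues can cluster near $0$ and near the top value, leaving $(\lambda_1,\lambda_2)$ empty; and interlacing, even if you could symmetrize and make $M(r)\hookrightarrow M(r+1)$ a bordered extension, only yields monotone nesting of the $\mu_i^{(r)}$, which again says nothing about how many land in a prescribed subinterval.

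The paper's actual mechanism is entirely different and is what supplies the missing quantitative control: away from the first $q$ and last $p$ rows, the eigenvalue equation $Mv=\lambda v$ forces $v$ to be a linear combination of the geometric vectors $x_i^{[0:2r-1]}$, where the $x_i$ are the roots of $(x+1)^n-\lambda x^p$; the boundary rows become two subspace conditions $V_u=\Ker(S_u-\lambda)$, $V_d=\Ker(S_d-\lambda)$, and an eigenvalue exists iff $L^d(V_u)\cap V_d\ne\{0\}$ with $d=2r-n$ (Proposition~\ref{Prop.reduce}). The detecting determinant has the form $2|A(\lambda)|\cos\bigl(d\phi(\lambda)+\phi_0\bigr)+o(1)$ because the borderline pair of roots is a complex-conjugate pair $\rho e^{\pm i\phi}$ with $\frac{d\phi}{d\lambda}\ne 0$ (Propositions~\ref{Prop.order} and~\ref{Prop.arg}, general position via Lemma~\ref{lemma.1} and the monodromy cycle). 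As $r\to\infty$ this function oscillates arbitrarily fast in $\lambda$, so it has arbitrarily many sign changes, hence arbitrarily many zeros, i.e.\ eigenvalues, in any subinterval of $\bigl(0,\frac{n^n}{p^pq^q}\bigr)$. Some such "shooting/oscillation" ingredient is indispensable; none of the three routes you sketch can substitute for it.
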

The theorem immediately imply 
\begin{cor}
Let $\alpha(r)$ be the maximal eigenvalue of $M(r)$. Then 
$\inf\lim \alpha(r)\geq \tilde e_{\alpha}^n$. With Inequality~(\ref{ineq1}) and Eq.\ref{Eq.value}
it proves
Limit~(\ref{Eq.limit2}) for rational $\alpha$. 
\end{cor}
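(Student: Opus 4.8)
The plan is to analyze the matrix $M(r)$ by comparing it with the "unperturbed" matrix one obtains by ignoring the corner blocks $S_u,S_d$. Write $M(r)$ as $C(r)+P(r)$, where $C(r)$ is the $2r\times 2r$ banded Toeplitz-type matrix whose $(k,j)$ entry is $C^{n}_{k-j+p}$ (the binomial coefficient, with the convention that it vanishes outside $j-p\le k\le j+q$), and $P(r)$ is the low-rank perturbation supported on the first $q$ rows and the last $p$ rows (this is exactly the decomposition displayed just before the theorem, with $S_u,S_d$ absorbed into $P$). The rank of $P(r)$ is at most $p+q=n$, uniformly in $r$. The key point is that $C(r)$ is a finite truncation of the bi-infinite banded Toeplitz operator $C_\infty$ with symbol $f(\theta)=\sum_{m=0}^{n} C^n_m e^{i(m-p)\theta}=e^{-ip\theta}(1+e^{i\theta})^n$, whose modulus on the unit circle ranges over $[0,(1+1)^n]$... but more to the point, the symbol relevant to the spectral edge is obtained by substituting $e^{i\theta}$ by a real variable, giving values $(1+x)^n/x^p$ for $x>0$, whose minimum-of-interest at $x=q/p$ equals $n^n/(p^pq^q)$. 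So I expect the essential (limiting) spectrum of $C(r)$ to fill the interval $[0,n^n/(p^pq^q)]$ as $r\to\infty$.

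The key steps, in order, would be: (1) Establish that $C(r)$, being a finite section of a banded Toeplitz matrix with symbol $(1+z)^n z^{-p}$, has its spectrum accumulate (as $r\to\infty$) on the interval $[0, n^n/(p^pq^q)]$; concretely, I would exhibit approximate eigenvectors. Given $\lambda\in(0,n^n/(p^pq^q))$, solve $(1+x)^n = \lambda x^p$ for a positive real root $x_\lambda$ (two such roots exist in the relevant range; this is where the bound $n^n/(p^pq^q)$ comes from, as it is the minimum of $(1+x)^n/x^p$), and use the vector with entries $x_\lambda^{\,k}$ suitably cut off to produce a test vector $v$ with $\|(C(r)-\lambda)v\|/\|v\|\to 0$. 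Because $M$ is self-adjointizable (it is an oscillation matrix with simple real positive spectrum, as noted in the footnote — or one can pass to a similar symmetric matrix via a diagonal conjugation), such an approximate eigenvector forces a genuine eigenvalue of $M(r)$ near $\lambda$. (2) Handle the perturbation: since $P(r)$ has rank $\le n$ independent of $r$, removing it changes the eigenvalue counting function on any interval by at most $n$ (by the interlacing/min-max inequality for symmetric matrices, after the diagonal symmetrization); hence the number of eigenvalues of $M(r)$ in $(\lambda_1,\lambda_2)$ differs from that of the symmetrized $C(r)$ by at most $2n$. (3) Combine: the number of eigenvalues of the symmetrized $C(r)$ in $(\lambda_1,\lambda_2)$ tends to infinity by step (1) (one can build $\Omega(r)$ mutually near-orthogonal test vectors with Rayleigh quotients clustering in the interval, e.g. by using different cutoff windows or slightly different $\lambda$-values), so the same holds for $M(r)$.

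For the diagonal symmetrization in step (1)–(2): conjugating $M$ by $\mathrm{diag}(t, t^2, \dots, t^{2r})$ rescales $N_+$ by $t$ and $N_-$ by $t^{-1}$; choosing $t$ so that the bidiagonal-type structure becomes symmetric (this is possible precisely because the oscillation-matrix structure guarantees real spectrum, and one can check it directly from the form of $E+N_\pm$) puts us in the self-adjoint setting where min-max applies. Alternatively, one can avoid symmetrization entirely by working with singular values or by using the fact, already quoted from Gantmacher, that all eigenvalues are simple, real and positive, together with a perturbation bound for diagonalizable matrices with uniformly bounded eigenvector condition number — but the symmetrization route is cleanest.

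I expect the main obstacle to be step (1), specifically making the approximate-eigenvector argument for the finite section $C(r)$ rigorous near the \emph{right endpoint} behavior and controlling the boundary effects of the cutoff: the naive geometric vector $(x_\lambda^k)_k$ is not in the kernel of $C(r)-\lambda$ because of the top and bottom rows, so one must either choose a windowed version that decays before hitting the boundary (exploiting that for $\lambda$ in the open interval the two real roots $x_\lambda^{(1)}<x_\lambda^{(2)}$ are distinct, so one can form a combination satisfying one boundary condition exactly and the other with exponentially small error) or invoke the known theory of eigenvalue distribution for banded Toeplitz matrices (Schmidt–Spitzer / Hurwitz-type results) describing the limiting set as $\{\lambda : (1+z)^n=\lambda z^p \text{ has two roots of equal modulus}\}$. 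Verifying that this limiting set, intersected with $\R_{>0}$, is exactly $(0, n^n/(p^pq^q)]$ is an elementary calculus exercise (minimize $(1+x)^n/x^p$), and once that identification is in hand, the infinitude of spectral points in any subinterval follows from the density of the limiting spectrum there together with the rank-$n$ perturbation bound.
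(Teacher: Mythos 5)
First, note that in the paper this corollary is a one-line deduction from Theorem~\ref{Th.spectrum}: that theorem supplies eigenvalues of $M(r)$ in every subinterval of $(0,\tfrac{n^n}{p^pq^q})$ for large $r$, and $\tfrac{n^n}{p^pq^q}=\tilde e_{p/n}^{\,n}$. You are in effect re-proving Theorem~\ref{Th.spectrum} by a Toeplitz route, which would be legitimate if it worked, but there are two genuine gaps. The first is that your description of the roots is backwards: the minimum of $(1+x)^n/x^p$ over $x>0$ is $\tfrac{n^n}{p^pq^q}$, attained at $x=p/q$, so for $\lambda$ in the \emph{open} interval $(0,\tfrac{n^n}{p^pq^q})$ the equation $(1+x)^n=\lambda x^p$ has \emph{no} positive real roots; the two distinct positive real roots exist only for $\lambda>\tfrac{n^n}{p^pq^q}$. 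Below the critical value the relevant pair of roots is a complex-conjugate pair of equal modulus (Propositions~\ref{Prop.critical}, \ref{Prop.unique.mod}, \ref{Prop.order}), so the candidate eigenvector is an oscillating vector $\rho^k\cos(k\phi+\phi_0)$, not a combination of two real geometric sequences. Your plan to ``satisfy one boundary condition exactly and the other with exponentially small error'' using two real roots therefore cannot be executed; one must instead track when the oscillation phase is compatible with both boundary blocks, which is exactly the fast-oscillating determinant of Lemma~\ref{lemma.3}. Relatedly, identifying the Schmidt--Spitzer set on $\R_{>0}$ with $(0,\tfrac{n^n}{p^pq^q}]$ is not a real-variable calculus exercise: the condition involves the moduli of the $p$-th and $(p+1)$-th complex roots, which is what Propositions~\ref{Prop.unique.mod} and \ref{Prop.order} are for.

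The second, and fatal, gap is the perturbation step. The matrix $M(r)$ is not diagonally symmetrizable when $p\neq q$: its band extends $q$ steps above and $p$ steps below the diagonal, so $M_{k,k+q}\neq 0$ while $M_{k+q,k}=0$, and no diagonal conjugation can repair an asymmetric zero pattern. (Total positivity gives simple positive eigenvalues but not symmetrizability.) Without a Hermitian structure, the claim that a rank-$n$ corner perturbation changes the eigenvalue count in an interval by at most $O(n)$ is false for non-normal matrices: the $2r\times 2r$ nilpotent Jordan block has all eigenvalues $0$, yet a single corner entry turns it into a cyclic permutation matrix with all eigenvalues on the unit circle. Truncated banded Toeplitz matrices are precisely this kind of exponentially ill-conditioned object (their eigenvector condition numbers grow exponentially in $r$, which also rules out your fallback via perturbation bounds for diagonalizable matrices). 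The entire difficulty that Sections~\ref{Sec.subspace}--\ref{Sec.proof1} address is to show that the specific corner blocks $S_u,S_d$ are in ``general position'' relative to the eigenvectors $x_i^{[0:n-1]}$ --- and this is where the cyclic monodromy of $(x+1)^n-\lambda x^p$ and Lemmas~\ref{lemma.0}--\ref{lemma.1} enter. Your proposal has no substitute for that step.
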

  
In the next section we show how to manage irrational $\alpha$. We postpone the proof
of Theorem~\ref{Th.spectrum} up to Section~\ref{Sec.proof1} as we need some technical
results.

\section{Estimates for irrational $\alpha$}\label{Sec.limit}

Let us start listing some simple facts about $B_{n,\alpha,r}$.
\begin{itemize}
\item If $w\in B_{n,\alpha,r}$ then $\{w0,w1\}\cap B_{n+1,\alpha,r}\neq\emptyset$. 
(The prolongation property)
\item If $n_1\leq n_2$ and $r_1\leq r_2$ then $|B_{n_1,\alpha,r_1}|\leq |B_{n_2,\alpha,r_2}|$.
\item $|B_{n,\alpha,r}|=|B_{n,1-\alpha,r}|$. Since there exists bijection between $|B_{n,\alpha,r}|$ and $|B_{n,1-\alpha,r}|$
induced by $0\leftrightarrow 1$. 
\end{itemize}
\begin{prop}\label{Prop.estimates} 
There exist $K_n(\alpha,\alpha'):\N\times (0,1)\times (0,1)\to (1,\infty)$  
$$
\lim\limits_{\alpha'\to\alpha}(\lim_{n\to\infty}\sqrt[n]{K_n(\alpha,\alpha')})= 1
$$ 
and $K_n(\alpha,\alpha')^{-1}|B_{n,\alpha',r}|\leq |B_{n,\alpha,r}|\leq 
K_n(\alpha,\alpha') |B_{n,\alpha',r}|$.
\end{prop}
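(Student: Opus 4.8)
The plan is to compare words balanced around $\alpha$ with words balanced around a nearby value $\alpha'$ by an explicit ``re-centering'' of the balance window, exploiting the fact that $B_{n,\alpha,r}$ is controlled by the relative position $|w[:k]|_0 - \fl{\alpha k}$. First I would fix $\alpha,\alpha'\in(0,1)$ and set $\delta=|\alpha-\alpha'|$. For any $k\le n$ we have $|\fl{\alpha k}-\fl{\alpha' k}|\le \delta k + 1$, so a word $w$ that is $(\alpha',r)$-balanced satisfies, for every prefix length $k$,
$$
|w[:k]|_0 - \fl{\alpha k} \in \{-r+1-\delta k-1,\dots, r+\delta k+1\}.
$$
Thus $B_{n,\alpha',r}\subseteq B_{n,\alpha, r+\delta n+1}$, and symmetrically $B_{n,\alpha,r}\subseteq B_{n,\alpha',r+\delta n+1}$. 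So it suffices to bound $|B_{n,\alpha,r'}|$ in terms of $|B_{n,\alpha,r}|$ when $r'=r+\delta n+1$, and to take $K_n(\alpha,\alpha')$ to be (a symmetric version of) the resulting ratio bound.

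The core estimate I would prove is a ``window-widening'' inequality: for any fixed $\alpha$ and any $r'\ge r$,
$$
|B_{n,\alpha,r'}| \le \binom{2r'}{2r}\cdot (\text{something like } 2r')^{?}\cdot |B_{n,\alpha,r}| \quad\text{— more honestly, } |B_{n,\alpha,r'}|\le (2r')^{2(r'-r)}|B_{n,\alpha,r}|,
$$
obtained as follows. Given $w\in B_{n,\alpha,r'}$, track the ``profile'' $k\mapsto |w[:k]|_0-\fl{\alpha k}$, which lives in $\{-r'+1,\dots,r'\}$ and changes by a bounded amount at each step. Cut the word at the maximal set of positions where the profile exits the narrower window $\{-r+1,\dots,r\}$; between consecutive cuts the piece is a shifted balanced word of the narrow type, and the number of cut positions needed is $O(r'-r)$ because each excursion outside the narrow window must travel a distance at least $1$ to get back. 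One then reconstructs $w$ from the list of (narrow-balanced) pieces, which gives a map from $B_{n,\alpha,r'}$ into tuples whose ``combinatorial part'' is counted by powers of $|B_{n,\alpha,r}|$ and whose ``bookkeeping part'' (where the cuts are, and the short bridging segments) is at most polynomial in $n$ raised to the power $O(r'-r)$. Since $r'-r=\delta n+1$ here, this bookkeeping factor is $n^{O(\delta n)}$, whose $n$-th root is $n^{O(\delta)}\to $ not quite $1$ — so I would instead be more careful and use that the number of excursions is bounded by a constant depending only on $r'-r$ times the total vertical variation, which for a genuinely $(\alpha,r')$-balanced word stays $O(r')$; the upshot is a bound of the form $|B_{n,\alpha,r'}|\le C(r,r',n)|B_{n,\alpha,r}|$ with $\lim_{n\to\infty}\sqrt[n]{C(r,r',n)}$ a function of $\delta$ tending to $1$ as $\delta\to0$. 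Packaging this, set
$$
K_n(\alpha,\alpha') = \max\Bigl\{\tfrac{|B_{n,\alpha',r}|}{|B_{n,\alpha,r}|},\ \tfrac{|B_{n,\alpha,r}|}{|B_{n,\alpha',r}|},\ 2\Bigr\}\le C(r,\lfloor r+\delta n\rfloor+1,n),
$$
which is $>1$ by construction and satisfies both the ratio inequality and, after taking $n$-th roots and then $\alpha'\to\alpha$, the stated double limit.

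The main obstacle is keeping the ``bookkeeping'' subexponential: naively, allowing the profile to wander in a window of width $\Theta(\delta n)$ costs a factor like $n^{\Theta(\delta n)}$, whose $n$-th root is $n^{\Theta(\delta)}$, which does \emph{not} go to $1$. The fix — and the step I expect to require the most care — is to observe that a word lying in $\tilde B_{n,\alpha,r'}$ (hence in particular any $(\alpha,r')$-balanced word) cannot have its prefix-zero-count drift by more than $O(r')$ from $\alpha k$ at the endpoint, so the \emph{number} of times the narrow window $\{-r+1,\dots,r\}$ is exited and re-entered is bounded purely in terms of $r'$ and $r$, not in terms of $n$; each exit/entry contributes a choice of location (a factor $\le n$) and a short bridging string (a factor $\le 2^{O(r')}$), so the total bookkeeping factor is $n^{O(1)}2^{O(r')}$, polynomial in $n$, with $n$-th root $\to1$. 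One then absorbs the residual $\alpha$-vs-$\alpha'$ discrepancy into the window width as above. Alternatively — and this may be cleaner — one can avoid excursion-counting entirely by noting that $|B_{n,\alpha,r'}|\le (2r'+1)^n$ trivially is useless, but $|B_{n,\alpha,r'}|/|B_{n,\alpha,r}|$ is nonincreasing-ish in $n$ up to subexponential factors via a supermultiplicativity argument for $|B_{n,\alpha,r}|$ (concatenation of balanced blocks, using the prolongation property), reducing the whole proposition to the single inequality $\limsup_n \sqrt[n]{|B_{n,\alpha,r'}|/|B_{n,\alpha,r}|}\to1$ as $r'/r\to1$, which follows from $e_{\alpha,r}\le \tilde e_\alpha$ and the (already-available for rational $\alpha$, and to-be-bootstrapped for all $\alpha$) monotone convergence $e_{\alpha,r}\uparrow\tilde e_\alpha$.
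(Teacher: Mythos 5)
There is a genuine gap, and it sits exactly where you flagged the difficulty. Your first reduction replaces the comparison of $\alpha$ with $\alpha'$ by the containment $B_{n,\alpha',r}\subseteq B_{n,\alpha,\,r+\delta n+1}$, i.e.\ by a window of width growing linearly in $n$. This step already loses too much to be repaired by any bookkeeping: for fixed $\delta>0$ the set $B_{n,\alpha,\,r+\delta n}$ has growth exponent $\tilde e_\alpha$ (a window of width $\delta n$ is asymptotically no constraint at all --- the profile of a typical word of $\tilde B_{n,\alpha,1}$ fluctuates only on the scale $\sqrt{n}$, so it stays inside the wide window with probability tending to $1$), whereas $|B_{n,\alpha,r}|$ grows like $e_{\alpha,r}^n$ with $e_{\alpha,r}$ in general strictly smaller than $\tilde e_\alpha$. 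Hence the ratio $|B_{n,\alpha,\,r+\delta n+1}|/|B_{n,\alpha,r}|$ is exponentially large with a rate that does \emph{not} tend to $0$ as $\delta\to0$; no window-widening inequality of the kind you seek can hold. Your proposed fix is also based on a false claim: the number of times the profile of a word in $B_{n,\alpha,r'}$ exits and re-enters the narrow window $\{-r+1,\dots,r\}$ is \emph{not} bounded in terms of $r$ and $r'$ alone --- the profile is a length-$n$ path and can cross the boundary $\Theta(n)$ times (e.g.\ by oscillating between levels $r$ and $r+1$); bounding the endpoint drift by $O(r')$ says nothing about the number of crossings. Finally, the ``cleaner alternative'' via $e_{\alpha,r}\uparrow\tilde e_\alpha$ is circular: that convergence is one of the main theorems of the paper, proved much later, and the present proposition is needed beforehand even to define $e_{\alpha,r}$ for irrational $\alpha$.

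The paper's proof avoids the widened window entirely. For $\alpha<\alpha'$ it builds a map $\psi:B_{n,\alpha,r}\to B_{n',\alpha',r}$ with the \emph{same} $r$, by greedily inserting zeros into $w$ so that every prefix stays $(\alpha',r)$-balanced; an inductive invariant on the profiles shows the algorithm never gets stuck and that the number of inserted zeros is at most $j_{\max}\approx\frac{\alpha'-\alpha}{1-\alpha'}\,n$. The fibers of $\psi$ are then bounded by $\sum_{j\le j_{\max}}C^{\alpha'n'+r}_{j}$ (choose which zeros were inserted), giving $K_n$ with $\sqrt[n]{K_n}\to(1/\beta)^{\beta}(1/(1-\beta))^{1-\beta}$ up to a factor $2^{j_{\max}/n}$, where $\beta\to0$ as $\alpha'\to\alpha$; the reverse inequality follows from the symmetry $0\leftrightarrow1$. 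If you want to salvage your approach, you would need some such length-changing injection rather than a window comparison at fixed length.
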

From the proposition follows that if limit~(\ref{Eq.limit}) exists for all rational 
$\alpha\in (0,1)$ then it exists for all
$\alpha\in (0,1)$. Moreover, the resulting family of functions $e_{\alpha,r}$ is uniformly  
continuous. So, 
$e_\alpha=\lim\limits_{r\to\infty}e_{\alpha,r}$ is continuous function. 
(Also, it is enough to check the convergence only for rational $\alpha$.)
\begin{proof}
Let $\alpha<\alpha'$. We start by constructing a map $\psi:B_{n,\alpha,r}\to B_{n',\alpha',r}$ 
where $n'$ will be specified later. 
To define $\psi(w)$ we add some $0$ between letters of $w$ by the following procedure.
\bigskip

{\it start $w$, $w'=\emptyset$ ($w'=$empty word)

step=0

while($w\neq\emptyset$):

\hspace{10.pt} step=step+1

\hspace{10.pt} $w_1$ is the first letter of w

\hspace{10.pt} $w=w[2:]$ \# remove the first letter from $w$

\hspace{10.pt} $w'=w'0^kw_1$, where $k$ is minimum $k$, such that 
$w'0^kw_1\in B_{|w'|+k+1,\alpha',r}$

end while
}
\bigskip

\noindent We have to check that the algorithm works. The only possible problem is in the last 
operation inside {\it while}. 
We prove that such $k$ exists by induction on {\it step}. 
Let $w$ be an initial word and $w'(s)$ be the word 
$w'$ after $s$ steps of algorithm. 

\bigskip
\noindent{\bf Statement} Let $w'(s)$ exist $s<|w|$ and 
$|\,w[:s]\,|_0-\fl{\alpha s}\geq |\,w'(s)\,|_0-\fl{\alpha' |w'(s)|}$. 
Then $w'(s+1)$ exists and
 $|\,w[:s+1]\,|_0-\fl{\alpha (s+1)}\geq |\,w'(s+1)\,|_0-\fl{\alpha' |w'(s+1)|}$.
\bigskip

\noindent Indeed, $|\,w[:s+1]\,|_0-\fl{\alpha (s+1)}\geq |\,w'(s)w_{s+1}\,|_0-
\fl{\alpha' (|w'(s)|+1)}$.
(Recall that $\alpha<\alpha'$.) 
So, if $w'(s)w_{s+1}\in B_{|w'(s)|+1,\alpha',r}$ we are done. 
If $w'(s)w_{s+1}\not\in B_{|w'(s)|+1,\alpha',r}$ then
 $-r+1 >|\,w'(s)w_{s+1}\,|_0-\fl{\alpha' (|w'(s)|+1)}$. Particularly, it implies that 
$w_{s+1}=1$. By the prolongation property $w'(s)0\in B_{|w'(s)|+1,\alpha',r}$  and there exists 
minimal $k$ such that 
$-r+1= |\,w'(s)w_{s+1}\,|_0+k-\fl{\alpha' (|w'(s)|+k+1)}$. It is easy to check that this $k$ 
is the $k$ from the last command of {\it wile} for the
{\it step}$=s+1$. The inequality of the Statement holds because $-r+1$ is the minimum possible 
value for $|\,w[:s]\,|_0-\fl{\alpha s}$.

Now, after $n=|w|$ steps we have $w'\in B_{|w'|,\alpha',r}$. This $w'$ is made from $w$ by adding 
somewhere $j$ zeroes. The statement allows us to estimate $j$:
$|\,w\,|_0-\fl{\alpha n}\geq |\,w\,|_0+j-\fl{\alpha'(n+j)}$, and 
$j\leq (1-\alpha')^{-1}(\alpha' n-\fl{\alpha n})$. Let
$j_{\max}= \fl{(1-\alpha')^{-1}(\alpha' n-\fl{\alpha n})}$ and
$n'=n+j_{\max}$. Now we define 
$\psi:B_{n,\alpha,r}\to B_{n',\alpha',r}$ as follows
$\psi(w)=w'u$ where $w'u \in B_{n',\alpha',r} $,  and $u$, say, lexicographically minimal 
satisfying this condition.

Let $w \in B_{n',\alpha',r}$. One has  
$$
|\psi^{-1}(w)|\leq \sum_{j=0}^{j_{\max}}C_j^{\alpha'n'+r}.
$$  
Indeed, any word from $\psi^{-1}(w)$ is made by removing a suffix of length $\leq j_{\max}-j$ 
and then by removing $j$ zeros from the rest. Now, we estimate 
$$
|B_{n,\alpha,r}|\leq \left(\sum_{j=0}^{j_{\max}}C_j^{\alpha'n'+r}\right)|B_{n',\alpha',r}|
\leq K_n\cdot |B_{n,\alpha',r}|,
$$
where $K_n=2^{j_{\max}}\sum_{j=0}^{j_{\max}}C_j^{\alpha'n'+r}$. Now,
$$
\sqrt[n]{K_n}\leq 2^{\frac{j_{\max}}{n}}\sqrt[n]{n}\sqrt[n]{C_{j_{\max}}^{\alpha'n'+r}},
$$ 
but $\frac{j_{\max}}{n}\to \frac{\alpha'-\alpha}{1-\alpha'}$, $\sqrt[n]{n}\to 1 $  and 
$\sqrt[n]{C_{j_{\max}}^{\alpha'n'+r}}\to \left(\frac{1}{\beta}\right)^\beta\left(\frac{1}{1-\beta}\right)^{1-\beta}$
when $n\to\infty$. Where $\beta=\lim\limits_{n\to\infty} \frac{j_{\max}}{\alpha'n'+r}=\frac{\alpha'-\alpha}{1-\alpha}$. 
In order to find $\lim\limits_{n\to\infty} \sqrt[n]{C_{j_{\max}}^{\alpha'n'+r}}$ one can use the methods of \cite{P1} as it is explained 
in Section~\ref{Sec.asymptotic}. 
For inequality from over direction we change $\alpha\to 1-\alpha$ and $\alpha'\to 1-\alpha'$.
Generally, we get another $K_n$, but then we take maximum of them. 
\end{proof} 

\section{About intersections of  linear subspaces.}\label{Sec.subspace}

For the proof of Theorem~\ref{Th.spectrum} I need several lemmas about linear spaces. 
In all these lemmas we use an extension of linear space to a larger field. Let $V$ be
an $n$-dimension vector space over a field $K$. Let $F$ be a finite extension of $K$.
Then $F\otimes_K V$ is an $n$-dimension vector space over $F$. Clearly, if $\tilde S$ is a
$p$-dimension subspace of $V$ then $F\otimes_K \tilde S$ is a $p$-dimension subspace 
(as a vector space over $F$) of $F\otimes_K V$. Let $\alpha\in\Gal(F:K)$. $\alpha$ may be 
extended to $\alpha\otimes id: F\otimes_KV\to F\otimes_KV$. One can see that $\alpha\otimes id$ is not
linear over $F$, but as we will see it conserve linear dependence. 
In what follows I write $\alpha$  instead of  $\alpha\otimes id$ 
in order to simplify notations. So, for example, $\alpha(f\otimes v)=\alpha(f)\otimes v$. 
Let $\bar x_1\in F\otimes_K V$, 
$\bar x_2=\alpha(\bar x_1),\;\bar x_3=\alpha(\bar x_2),\dots,\bar x_{j+1}=\alpha(\bar x_j),\dots$.
\begin{lemma}\label{lemma.0}
Suppose that $\bar x_1, \bar x_2,\dots,\bar x_n$ form a basis of $F\otimes_K V$ over $F$. Then
$$
\span(\bar x_1,\bar x_2,\dots,\bar x_{n-p})\cap F\otimes_K {\tilde S}=\{0\} 
$$ 
\end{lemma}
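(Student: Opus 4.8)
The plan is to exploit the fact that the Galois automorphism $\alpha$, although not $F$-linear on $F\otimes_K V$, preserves $K$-linear combinations and, crucially, maps the subspace $F\otimes_K\tilde S$ to itself (since $\tilde S$ is defined over $K$). First I would fix notation: let $W=\span_F(\bar x_1,\dots,\bar x_{n-p})$ and suppose, for contradiction, that there is a nonzero vector $\bar v\in W\cap(F\otimes_K\tilde S)$. Write $\bar v=\sum_{i=1}^{n-p}c_i\bar x_i$ with $c_i\in F$, not all zero. The key observation is that applying $\alpha$ repeatedly shifts the indices: since $\bar x_{i+1}=\alpha(\bar x_i)$ and $\alpha$ is additive with $\alpha(c\otimes w)=\alpha(c)\otimes w$, we get $\alpha^{j}(\bar v)=\sum_{i=1}^{n-p}\alpha^j(c_i)\,\bar x_{i+j}$.

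Next I would run over $j=0,1,\dots,p$ (there are $p+1$ such values) to produce $p+1$ vectors $\alpha^j(\bar v)$, each of which lies in $F\otimes_K\tilde S$ because that subspace is $\alpha$-invariant. Each $\alpha^j(\bar v)$ is an $F$-combination of $\bar x_{1+j},\dots,\bar x_{n-p+j}$, so collectively they are supported on $\bar x_1,\dots,\bar x_n$ — the whole basis. I claim these $p+1$ vectors are $F$-linearly independent: a dependence $\sum_{j=0}^{p}d_j\,\alpha^j(\bar v)=0$ would, after collecting coefficients in the basis $\{\bar x_i\}$, force (looking at the coefficient of $\bar x_1$, which only $\alpha^0(\bar v)$ contributes to, then $\bar x_2$, etc.) that $d_0 c_1=0$, and inductively all $d_j=0$ once one uses that the "leading" coefficient $c_1$ (or more carefully, the first nonzero $c_i$) is nonzero and $\alpha$ is a bijection on $F$. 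Hence $\dim_F(F\otimes_K\tilde S)\ge p+1$, contradicting that $\tilde S$ has dimension $p$ and therefore $F\otimes_K\tilde S$ has dimension $p$ over $F$. This contradiction shows the intersection is trivial.

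The step I expect to be the main obstacle is making the linear-independence argument for $\alpha^0(\bar v),\dots,\alpha^{p}(\bar v)$ fully rigorous: one must track the supports carefully, since the shifted windows $\{1+j,\dots,n-p+j\}$ overlap, and argue from the extreme indices inward. If the first nonzero coefficient of $\bar v$ is $c_{i_0}$, then $\bar x_{i_0}$ appears only in $\alpha^0(\bar v)$, $\bar x_{i_0+1}$ only in $\alpha^0(\bar v)$ and $\alpha^1(\bar v)$, and so on, which gives a triangular system in the $d_j$ with nonzero diagonal entries $\alpha^j(c_{i_0})$ — nonzero because $\alpha$ is an automorphism. I should double-check that $i_0+p\le n-p+p=n$, i.e. that all these basis vectors are genuinely among $\bar x_1,\dots,\bar x_n$, which holds since $i_0\le n-p$. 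With that bookkeeping in place the dimension count closes the argument.
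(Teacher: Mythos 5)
Your proof is correct, but it takes a genuinely different route from the paper's. You argue directly: starting from a hypothetical nonzero $\bar v\in\span(\bar x_1,\dots,\bar x_{n-p})\cap (F\otimes_K\tilde S)$, you produce the $p+1$ vectors $\bar v,\alpha(\bar v),\dots,\alpha^p(\bar v)$, all lying in the $\alpha$-invariant space $F\otimes_K\tilde S$, and show they are $F$-independent by the triangular-support argument (the windows $\{i_0+j,\dots,n-p+j\}$ slide to the right, the leading coefficient $\alpha^j(c_{i_0})$ is nonzero because $\alpha$ is an automorphism, and $i_0+p\le n$ keeps everything inside the basis). This contradicts $\dim_F(F\otimes_K\tilde S)=p$ in one stroke. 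The paper instead runs an inductive descent: from a nonzero $z\in\span(\bar x_1,\dots,\bar x_k)\cap(F\otimes_K\tilde S)$ with $k\le n-p$ it uses the dependence of $z,\alpha(z),\dots,\alpha^p(z)$ (the same dimension count, but used in the opposite direction) to place $\alpha^{n-k}(z)$ in both $\span(\bar x_1,\dots,\bar x_{k+p-1})$ and $\span(\bar x_{n-k+1},\dots,\bar x_n)$, intersects these spans, and pulls back by $\alpha^{-(n-k)}$ to get a nonzero element of $\span(\bar x_1,\dots,\bar x_{2k+p-n-1})\cap(F\otimes_K\tilde S)$, iterating until the span is empty. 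Both proofs rest on the same two facts --- $\alpha(F\otimes_K\tilde S)=F\otimes_K\tilde S$ and the semilinear shift $\alpha(\sum c_i\bar x_i)=\sum\alpha(c_i)\bar x_{i+1}$ --- but your one-shot pigeonhole on the dimension of $F\otimes_K\tilde S$ is shorter and avoids the paper's bookkeeping with the quantity $2k+p-n-1$; the paper's descent, in exchange, never needs to exhibit an explicit independent family. Your version is complete as stated; just make sure to record explicitly, as you do, that $F\otimes_K\tilde S$ has $F$-dimension exactly $p$ and is $\alpha$-stable, since these are the only two external inputs.
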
       
\begin{proof}
Notice the following simple fact:
\begin{itemize}
\item $\alpha(F\otimes_K {\tilde S})=F\otimes_K{\tilde S}$.
\item Let $y\in F\otimes_K V$ and 
$\alpha^k(y)\in\span\left(y,\alpha(y),\alpha^2(y),\dots,\alpha^{k-1}(y)\right)$.
\newline
Then
$\alpha^m(y)\in\span\left(y,\alpha(y),\alpha^2(y),\dots,\alpha^{k-1}(y)\right)$ for 
any $m\in\N$.
\end{itemize}
We prove the following inductive

\bigskip
\noindent{\bf Statement} Let $k\leq n-p$ and $\span(\bar x_1,\bar x_2,\dots,\bar x_k)\cap F\otimes_K{\tilde S}\neq\{0\}$.
Then $\span(\bar x_1,\dots,\bar x_{2k+p-n-1})\cap F\otimes_K{\tilde S}\neq\{0\}$.
(We suppose here that $\span(\bar x_1,\dots,\bar x_m)=\span(\emptyset)=\{0\}$ for $m<1$. In this case the statement leads
to a contradiction that proves our lemma.) 
\bigskip

\noindent Indeed, let $0\neq z\in\span(\bar x_1,\dots,\bar x_k)\cap F\otimes_K{\tilde S}$. Then
$0\neq\alpha(z)\in\span(\bar x_2,\dots,\bar x_{k+1})\cap F\otimes_K{\tilde S}$,...,
$0\neq\alpha^{p-1}(z)\in\span(\bar x_p,\dots,\bar x_{k+p-1})\cap F\otimes_K{\tilde S}$,....,
$0\neq\alpha^{n-k}(z)\in\span(\bar x_{n-k+1},\dots,\bar x_{n})\cap F\otimes_K{\tilde S}$. As the dimension of 
$F\otimes_K{\tilde S}$ is $p$ one has that 
$\alpha^p(z)\in\span\left(z,\alpha(z),\dots,\alpha^{p-1}(z)\right)$
and, consequently, $\alpha^{n-k}(z)\in\span\left(z,\alpha(z),\dots,\alpha^{p-1}(z)\right)$. 
It follows that $\alpha^{n-k}(z)\in\span(\bar x_1,\dots, \bar x_{k+p-1})\cap 
\span(\bar x_{n-k+1},\dots,\bar x_n)$ and $\alpha^{n-k}(z)\in\span(\bar x_{n-k+1},\dots,\bar x_{k+p-1})$, as 
$\bar x_i$ form a basis. Applying $\alpha^{-n+k}$ to the last inclusion one gets
$z\in\span(\bar x_1,\dots,\bar x_{2k+p-n-1})$.
Obviously, $2k+p-n-1<k$ for $k<n-p+1$ and the lemma is proved.      
\end{proof}

The next lemma is an easy corollary of Lemma~\ref{lemma.0}.
\begin{lemma}\label{lemma.1}
Let $P\in\C(\lambda)[x]$ of order $n$, $x_i$ be the roots of $P$ and 
the cyclic permutation $(x_1,x_2,\dots,x_n)\in\Gal(P,\C(\lambda))$.
Let $S\in \Mat_{p,n}(\C(\lambda))$ with $\rank(S)=p$. Let
$$
X=\left(\begin{array}{cccc} 1    &    1    & \dots    & 1\\
                           x_1   &   x_2    & \dots   & x_p\\
                           x_1^2 & x_2^2    & \dots   & x_p^2 \\
                           .     &    .    & \dots    & . \\
                           .     &    .    & \dots    & . \\
                           .     &    .    & \dots    & . \\
                        x_1^{n-1}& x_2^{n-1} &\dots & x_p^{n-1}
              \end{array}\right)
$$     
Then equation $\det(SX)=0$ (as the function of $\lambda$)  has finite solutions in each 
compact subset of the corresponding Riemann surface.  
\end{lemma}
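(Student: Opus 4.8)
The plan is to reduce the statement to Lemma~\ref{lemma.0}. Set $V=\C(\lambda)^n$ with $K=\C(\lambda)$, and let $F$ be the splitting field of $P$ over $\C(\lambda)$, so that $F\otimes_K V$ has the basis of columns of $X$ extended to all $n$ roots. Write $\bar x_i=(1,x_i,x_i^2,\dots,x_i^{n-1})^t$ for the Vandermonde column attached to the root $x_i$; since the $x_i$ are distinct (as functions of $\lambda$, away from a finite ramification set) the full $n\times n$ Vandermonde is invertible, so $\bar x_1,\dots,\bar x_n$ form an $F$-basis of $F\otimes_K V$. The hypothesis that the cyclic permutation $(x_1,x_2,\dots,x_n)$ lies in $\Gal(P,\C(\lambda))$ gives an element $\alpha$ of the Galois group with $\alpha(x_i)=x_{i+1}$ (indices mod $n$), hence $\alpha(\bar x_i)=\bar x_{i+1}$; that is, the chain $\bar x_1,\bar x_2=\alpha(\bar x_1),\dots$ of Lemma~\ref{lemma.0} is exactly the cyclic orbit of $\bar x_1$, and the basis condition of that lemma is satisfied. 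Take $\tilde S=\Ker(S)\subset V$, which has dimension $n-p$ since $\rank(S)=p$; applying Lemma~\ref{lemma.0} with this $\tilde S$ (note the lemma's ``$p$'' becomes our $n-p$) yields $\span(\bar x_1,\dots,\bar x_p)\cap (F\otimes_K\Ker S)=\{0\}$.

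Next I would translate this into the non-vanishing of $\det(SX)$. The columns of $X$ are precisely $\bar x_1,\dots,\bar x_p$, so $\det(SX)=0$ at a given value $\lambda_0$ (a point of the Riemann surface where the roots and $S$ are defined and the roots are distinct) exactly when the $p$ vectors $S\bar x_1,\dots,S\bar x_p\in\C^p$ are linearly dependent, i.e.\ when some nonzero $F$-linear combination of $\bar x_1,\dots,\bar x_p$ lies in $\Ker S$. By the previous paragraph this cannot happen \emph{identically} in $\lambda$: $\det(SX)$ is a not-identically-zero element of $F$ (equivalently, a not-identically-zero meromorphic function on the Riemann surface of $P$), because Lemma~\ref{lemma.0} shows the generic $p\times p$ minor situation is nondegenerate. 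A nonzero meromorphic function on a Riemann surface has only isolated zeros, and isolated zeros accumulate only at the finitely many excluded points (branch points of the covering and poles/zeros of the coefficients of $S$); hence in any compact subset of the Riemann surface there are only finitely many solutions of $\det(SX)=0$.

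The one genuine point requiring care — the ``main obstacle'' — is the passage from ``$\det(SX)\not\equiv 0$ as an algebraic identity'' to ``finitely many zeros on each compact set.'' Here one must be slightly careful about what ``the corresponding Riemann surface'' is: the natural object is the curve $\{(\lambda,x):P(x,\lambda)=0\}$ (or its smooth compactification), on which each $x_i$ and each entry of $X$ becomes a single-valued meromorphic function after choosing a sheet, and $S$ has meromorphic entries in $\lambda$; then $\det(SX)$ is meromorphic there, not identically zero, so its zero set is discrete, and discreteness plus compactness gives finiteness. I would state this explicitly and invoke the standard fact that a nonzero holomorphic/meromorphic function on a (connected) Riemann surface has isolated zeros. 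Everything else is bookkeeping: checking the index shift between the lemma's $p$ and the current $n-p$, and noting that the finite set of bad $\lambda$'s (where roots collide or $S$ degenerates) is exactly the locus we are allowed to throw away.
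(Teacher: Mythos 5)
Your proof is correct and follows essentially the same route as the paper's: both reduce to Lemma~\ref{lemma.0} with $K=\C(\lambda)$, $F$ the splitting field of $P$, $\tilde S=\Ker(S)$ and $\bar x_1=(1,x_1,\dots,x_1^{n-1})^t$, concluding that $\det(SX)\not\equiv 0$ and hence vanishes only on a discrete set, so finitely often on each compact subset of the Riemann surface. You merely run the argument in the contrapositive direction and spell out the index shift ($\dim\Ker(S)=n-p$) and the isolated-zeros step that the paper leaves implicit.
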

\begin{proof}
Suppose the contrary, that $\det(SX)=0$ has a countable set of solutions  in
a compact subset of $\C$.  It follows that $\det(SX)\equiv 0$ 
for all $\lambda$. Now, the lemma follows by application of Lemma~\ref{lemma.0}
with $K=C(\lambda)$, $F$ being the splitting field of $P$, ${\tilde S}=\Ker(S)$ and
$\bar x_1=x_1^{[0:n-1]}=(1,x_1,\dots,x_1^{n-1})$. 
\end{proof}

Let $V=\R^n$ be a real linear space. We need the complexification $\C\otimes_\R V$ of $V$. 
On $\C\otimes_\R V$ the following antilinear involution is defined
$\cdot^*:\C\otimes_\R V\to \C\otimes_\R V$ as $(c\otimes v)^*=c^*\otimes v$. 
A subspace $Y$ of $\C\otimes_\R V$ is of the form $Y=\C\otimes_\R X$ if and only if 
$Y$ is closed with respect to involution $(\cdot)^*$. In this case we denote $X=\Re(Y)$.
 
Let $V_u$ be a subspace of $V$, $\dim(V_u)=q$. Let $L:V\to V$ be a real linear operator, 
diagonalizable in $\C\otimes_\R V$. 
Let $e_1,\dots e_n$ be eigen-vectors of $L$ with corresponding 
eigen-values $\alpha_1,\dots, \alpha_n$, ordering such that 
$|\alpha_1|\geq |\alpha_2|\geq\dots\geq |\alpha_n|$.

\begin{lemma}\label{lemma.2}
Suppose that
\begin{itemize}
\item $\alpha_q=\rho e^{i\phi}\not\in\R$, $\alpha_{q+1}=\alpha_q^*$ and 
$|\alpha_{q+1}|>|\alpha_{q+2}|$.
\item $\span(e_{q+1},\dots e_n)$ and $\C\otimes_\R V_u$ are in general position.
\end{itemize}
Then   $L^d(V_u)\to \span(\Re\left(\span(e_1,e_2,...,e_{q-1})\right),ae^{id\phi}e_q+a^*e^{-id\phi}e_{q+1})$ as 
$d\to\infty$.
Precisely, $L^d(V_u)$ has a basis 
$b_1+v_1,\dots, b_{q-1}+v_{q-1},ae^{in\phi}e_q+a^*e^{-in\phi}e_{q+1}+v_q$ with
$\|v_i\|\leq c\left(\frac{|\alpha_{q+2}|}{|\alpha_i|}\right)^d$. 
Where $c$ is independent of $d$ and $b_1,\dots b_{q-1}$ is a (real) basis of $\Re\left(\span(e_1,e_2,...,e_{q-1})\right)$. 
\end{lemma}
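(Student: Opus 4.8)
The plan is to track the image $L^d(V_u)$ by choosing, for each $d$, a convenient basis of $V_u$ and then applying $L^d$ coordinate-wise in the eigenbasis $e_1,\dots,e_n$. Work throughout in $\C\otimes_\R V$ and remember at the end to take $\Re(\cdot)$ of any conjugation-closed subspace. Write every vector in the eigen-coordinates $(\xi_1,\dots,\xi_n)$ relative to $e_1,\dots,e_n$. The general-position hypothesis says precisely that $\C\otimes_\R V_u$, a $q$-dimensional subspace, meets $\span(e_{q+1},\dots,e_n)$ only at $0$; equivalently the projection of $\C\otimes_\R V_u$ onto $\span(e_1,\dots,e_q)$ along $\span(e_{q+1},\dots,e_n)$ is an isomorphism. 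Hence $\C\otimes_\R V_u$ has a (unique) basis of the form $f_j = e_j + \sum_{k>q} c_{jk} e_k$ for $j=1,\dots,q$. Since $V_u$ is real, applying $(\cdot)^*$ permutes this basis in the same way $(\cdot)^*$ permutes $e_1,\dots,e_q$ (here $e_q^* = e_{q+1}$ up to scalar, by $\alpha_{q+1}=\alpha_q^*$, and one may normalize so that $e_q^*=e_{q+1}$); this will be used to pin down the real structure of the limit.

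Next I would apply $L^d$: $L^d f_j = \alpha_j^d e_j + \sum_{k>q} c_{jk}\alpha_k^d e_k$. For $j<q$ divide by $\alpha_j^d$ to get $e_j + \sum_{k>q} c_{jk}(\alpha_k/\alpha_j)^d e_k$; since $|\alpha_k|\le |\alpha_{q+2}| < |\alpha_j|$ for all $k>q$ and $j\le q-1$ (using $|\alpha_{q-1}|\ge|\alpha_q|=|\alpha_{q+1}|>|\alpha_{q+2}|$, so in fact $|\alpha_j|\ge|\alpha_{q+1}|>|\alpha_{q+2}|$ for $j\le q+1$), the correction term $v_j := \sum_{k>q} c_{jk}(\alpha_k/\alpha_j)^d e_k$ satisfies $\|v_j\|\le c(|\alpha_{q+2}|/|\alpha_j|)^d$ with $c$ depending only on the $c_{jk}$ and the norms of the $e_k$. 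For $j=q$ and $j=q+1$ one cannot divide by $\alpha_q^d$ alone because $|\alpha_q|=|\alpha_{q+1}|$ and the two indices have the same modulus; instead combine. The real structure forces $f_{q+1} = f_q^*$ after normalization, so $L^d f_q = \alpha_q^d e_q + \sum_{k>q} c_{qk}\alpha_k^d e_k$ and $L^d f_{q+1} = (\alpha_q^*)^d e_{q+1} + \sum_{k>q} c_{qk}^*\alpha_k^d e_k$ (using that the $\alpha_k$, $k>q$, that are real stay real and the complex ones pair up, so the conjugate of $\alpha_k^d$ is again some $\alpha_{k'}^d$). Write $\alpha_q = \rho e^{i\phi}$. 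Then $\rho^{-d} L^d f_q = e^{id\phi} e_q + \rho^{-d}\sum_{k>q} c_{qk}\alpha_k^d e_k$; the tail is $O((|\alpha_{q+2}|/\rho)^d)$. Now form the real vector $\rho^{-d} L^d (a f_q + a^* f_{q+1})$ for a fixed nonzero $a\in\C$; its leading part is $a e^{id\phi} e_q + a^* e^{-id\phi} e_{q+1}$ and its tail $v_q$ has norm $O((|\alpha_{q+2}|/\rho)^d) = O((|\alpha_{q+2}|/|\alpha_q|)^d)$, matching the claimed bound with $|\alpha_i|=|\alpha_q|$ for $i=q$.

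Finally I would assemble the basis $b_1 + v_1,\dots, b_{q-1} + v_{q-1}, a e^{id\phi} e_q + a^* e^{-id\phi} e_{q+1} + v_q$ of (a rescaled copy of) $L^d(V_u)$, where for $j\le q-1$ I replace the complex basis vectors $e_j$ (and their conjugates) by a real basis $b_1,\dots,b_{q-1}$ of $\Re(\span(e_1,\dots,e_{q-1}))$ — legitimate because $\span(e_1,\dots,e_{q-1})$ is conjugation-closed (the eigenvalues $\alpha_1,\dots,\alpha_{q-1}$, being a top block closed under conjugation once we also note $|\alpha_{q-1}|\ge|\alpha_q|$ may tie; one should state the mild genericity that the modulus-$|\alpha_q|$ eigenvalues are exactly $\alpha_q,\alpha_{q+1}$, which is implicit in $|\alpha_{q+1}|>|\alpha_{q+2}|$ together with the ordering, so that $\span(e_1,\dots,e_{q-1})$ is indeed a union of conjugate pairs and real eigenlines). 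Rescaling each basis vector back does not affect the span, so $L^d(V_u)$ equals the span of these vectors; letting $d\to\infty$ the $v_i\to 0$ and the span converges (in the Grassmannian) to $\span(b_1,\dots,b_{q-1}, a e^{id\phi}e_q + a^* e^{-id\phi} e_{q+1})$, which is the asserted limit. The only genuinely delicate point is the bookkeeping around the modulus-$|\alpha_q|$ block: one must use $|\alpha_{q+1}|>|\alpha_{q+2}|$ to guarantee the decaying tail, and one must invoke the real structure ($f_{q+1}=f_q^*$) to see that the surviving $q$-th direction is forced into the real combination $a e^{id\phi}e_q + a^* e^{-id\phi}e_{q+1}$ rather than an arbitrary complex line; everything else is routine estimation of geometric series in the eigenbasis.
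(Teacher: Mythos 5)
Your proposal follows essentially the same route as the paper's proof: use the general-position hypothesis to produce the adapted basis $f_j=e_j+\sum_{k>q}c_{jk}e_k$ of $\C\otimes_\R V_u$, apply $L^d$ and normalize by $\alpha_j^d$, observe that the tails decay, and then realify, pairing $e_q$ with $e_{q+1}=e_q^*$ to obtain the oscillating $q$-th direction $ae^{id\phi}e_q+a^*e^{-id\phi}e_{q+1}$. One caveat: your inequality ``$|\alpha_k|\le|\alpha_{q+2}|$ for all $k>q$'' fails at $k=q+1$, since $|\alpha_{q+1}|=|\alpha_q|$; hence for $j\le q-1$ the term $c_{j,q+1}(\alpha_{q+1}/\alpha_j)^d e_{q+1}$ decays only at rate $\left(|\alpha_q|/|\alpha_j|\right)^d$, and not at all if $|\alpha_j|=|\alpha_q|$. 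The paper glosses over the same point by asserting $w_j\in\span(e_{q+2},\dots,e_n)$ for $j<q$, which conjugation symmetry alone does not force; in both write-ups the qualitative convergence that is actually used later (in Lemma~\ref{lemma.3}) survives because in the intended application the moduli satisfy $|\alpha_{q-1}|>|\alpha_q|$.
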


\begin{proof}

One can choose $e_j$ such that $e_j^*=e_{\tilde j}$, where $\tilde\cdot:\{1,2,\dots,n\}\to \{1,2,\dots,n\}$. 
Moreover, $\tilde j\in \{1,\dots,q\}$ for 
$j\in \{1,\dots,q\}$ and  $e_q^*=e_{q+1}$.
    
Let $V^\perp$ ($(\C\otimes_\R V)^\perp$) denote the dual space of $V$ over $\R$ ($\C\otimes_\R V$ over $\C$). 
Clearly, $V^\perp\subset (\C\otimes_\R V)^\perp$ and $f\in V^\perp$ if and only if 
$f(v^*)=(f(v))^*$ for any $v\in \C\otimes_\R V$.
Let $f_1,f_2,\dots,f_{n-q}\in V^\perp$ be such that 
$v\in V_u\;\; \Longleftrightarrow\;\; \forall j\in\{1,2,\dots,n-q\}\;f_j(v)=0$.
Let $(j,k)\in \{1,2,\dots,n-q\}^2$. One has that $\det(f_j(e_{q+k}))\neq 0$ 
(the condition of the general position). It follows that $\C\otimes_\R V_u$ has a
basis $e_j+w_j$ where $j\in \{1,\dots,q\}$ and 
$w_j\in\span(e_{q+1},\dots,e_n)$, $w_j^*=w_{\tilde j}$. 
Now, for $j=1,2,\dots, q-1$ one has that $w_j\in\span(e_{q+2},\dots,e_n)$ and $w_q=e_{q+1}+\dots$. 
In order to finish the
proof one should apply $L^d$ to the basis and change a pair $e_j$, $e_{\tilde j}$ by 
$b_j=(e_j+e_{\tilde j})/2$ and
$b_{\tilde j}=(e_j-e_{\tilde j})/2i$.
\end{proof}

I need a parametric variant of the presiding lemma. Now our space $V_u$ and 
linear operator 
$L$ are $C_1$-smoothly depend on
a real parameter $\lambda$, $\dim(V_u)=q$. All eigen-values 
$\alpha_i$ of $L$ are different for 
$\lambda=\lambda_0$ and ordered as before. 
Also we have another space $V_d$, $\dim(V_d)=n-q$. $V_d$ as well $C_1$-smoothly depends on
$\lambda$.

\begin{lemma}\label{lemma.3} 
Let for $\lambda_0$ one have
\begin{itemize}
\item  $\alpha_q=\rho e^{i\phi}\not\in\R$, $\alpha_{q+1}=\alpha_q^*$ and 
$|\alpha_{q+1}|>|\alpha_{q+2}|$; 
\item $\frac{d\phi}{d\lambda}\neq 0$
\item  $\span(e_{q+1},\dots e_n)$ and $\C\otimes_\R V_u$ are in general position
\item $\span(e_1,\dots e_q)$ and $\C\otimes_\R V_d$ are in general position. 
\end{itemize}  
Then for any interval $(\lambda_1,\lambda_2)\ni\lambda_0$ there exists $d_0$ such that for any 
$d_0<d\in\N$ one has
$L^d(V_u)\cap V_d\neq\{0\}$ for some $\lambda\in (\lambda_1,\lambda_2)$.
\end{lemma}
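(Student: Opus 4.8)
The idea is to reduce the parametric statement to the non‑parametric Lemma~\ref{lemma.2} together with a winding/degree argument that exploits the hypothesis $\frac{d\phi}{d\lambda}\neq 0$. First I would invoke Lemma~\ref{lemma.2} at every $\lambda$ near $\lambda_0$ (the first and third bullets are exactly its hypotheses, and by continuity of the eigenvalues they persist on a small interval around $\lambda_0$, which we may assume to be contained in $(\lambda_1,\lambda_2)$). This gives a basis of $L^d(V_u)$ of the form $b_1(\lambda)+v_1,\dots,b_{q-1}(\lambda)+v_{q-1},\ a(\lambda)e^{id\phi(\lambda)}e_q(\lambda)+a(\lambda)^*e^{-id\phi(\lambda)}e_{q+1}(\lambda)+v_q$, where the $v_i$ are $O\!\left((|\alpha_{q+2}|/|\alpha_{q+1}|)^d\right)$ uniformly on the interval, so in the limit $d\to\infty$ the space $L^d(V_u)$ converges to $\Re\big(\span(e_1,\dots,e_{q-1})\big)$ plus the one real direction $a e^{id\phi}e_q+a^*e^{-id\phi}e_{q+1}$. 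The only part of this limiting $q$‑plane that moves with $d$ is that last direction, and it rotates with angular speed governed by $\phi(\lambda)$.

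Next I would set up the intersection condition as the vanishing of a determinant. Choose $C_1$ covectors $g_1(\lambda),\dots,g_q(\lambda)\in V^\perp$ cutting out $V_d$, i.e.\ $v\in V_d\iff g_j(v)=0$ for all $j$. Then $L^d(V_u)\cap V_d\neq\{0\}$ as soon as the $q\times q$ matrix $G^{(d)}(\lambda)=\big(g_j(\text{$i$-th basis vector of }L^d(V_u))\big)$ is singular. Plugging in the basis above, and using the fourth bullet (general position of $\span(e_1,\dots,e_q)$ with $\C\otimes_\R V_d$) to guarantee the "frozen" part of this determinant is bounded away from zero, the determinant $\det G^{(d)}(\lambda)$ equals $F(\lambda)\cdot\big(\text{a nonzero factor}\big) + O\!\left((|\alpha_{q+2}|/|\alpha_{q+1}|)^d\right)$, where the principal term $F(\lambda)$ is a real‑analytic‑type function whose sign is controlled by a factor of the form $\Re\!\big(\kappa(\lambda)\,e^{i d\phi(\lambda)}\big)$ for some nonvanishing continuous $\kappa$. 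Here I am using that projecting the real vector $ae^{id\phi}e_q+a^*e^{-id\phi}e_{q+1}$ against a real covector produces exactly such a real‑part expression.

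The heart of the argument is then a sign‑change observation. Since $\frac{d\phi}{d\lambda}\neq 0$ at $\lambda_0$, the phase $d\,\phi(\lambda)$ sweeps through an interval of length $\geq d\cdot|\phi'(\lambda_0)|\cdot(\text{width of a fixed subinterval})/2$, which exceeds $2\pi$ once $d>d_0$ for a suitable $d_0$. Over such a range the function $\Re\!\big(\kappa(\lambda)e^{id\phi(\lambda)}\big)$ must change sign (its argument winds through a full circle while $\kappa$ stays in a fixed half‑plane locally, or more robustly: on a subinterval where $\kappa$ varies little, $\cos$ of a phase that increases by more than $\pi$ takes both signs). Hence $\det G^{(d)}(\lambda)$, whose dominant term carries this sign, also changes sign on $(\lambda_1,\lambda_2)$ for all $d>d_0$; by continuity (the $g_j$, $e_i$, $\alpha_i$, $a$ all being continuous in $\lambda$ on the interval) there is a zero, i.e.\ a $\lambda$ with $L^d(V_u)\cap V_d\neq\{0\}$.

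\textbf{Main obstacle.} The delicate point is not the winding itself but controlling the error terms uniformly: one must check that the $O\!\left((|\alpha_{q+2}|/|\alpha_{q+1}|)^d\right)$ remainder from Lemma~\ref{lemma.2}, after being fed through the covectors $g_j$ and the determinant expansion, is genuinely dominated by the oscillating principal term — which itself is only of size $O(1)$, not growing. This requires that the oscillating term not get too close to zero too often relative to the decay rate of the remainder; one handles it by noting the remainder decays \emph{geometrically} in $d$ while the principal term, on the subinterval where the phase increases by a fixed amount like $3\pi$, attains a value bounded below by a fixed positive constant (e.g.\ $|\kappa|$ times $\cos$ of something near $0$ or $\pi$) independent of $d$. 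Making the bookkeeping of "fixed constant vs.\ geometric decay" precise, and choosing $d_0$ accordingly, is the one genuinely technical step; everything else is continuity and the two general‑position hypotheses ensuring no accidental degeneration of the frozen minors.
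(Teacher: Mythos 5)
Your proposal is correct and follows essentially the same route as the paper: apply Lemma~\ref{lemma.2} to get the asymptotic basis of $L^d(V_u)$, express the intersection condition as the vanishing of a determinant whose leading term is $2|A(\lambda)|\cos(d\phi(\lambda)+\phi_0)$ plus a remainder tending to $0$, and use $\frac{d\phi}{d\lambda}\neq 0$ to force a sign change on any subinterval for $d$ large. The only cosmetic difference is that you phrase the criterion via a $q\times q$ determinant of covectors cutting out $V_d$ while the paper uses the full $n\times n$ determinant $\det(w_1,\dots,w_{n-q},B_d)$; your discussion of the uniform control of the error versus the $O(1)$ oscillating term is in fact more careful than the paper's.
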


\begin{proof}
Fix a basis $ w_1,w_2,\dots,w_{n-q}$ of $V_d$. Let $\det(w_1\dots,w_{n-q},b_1,\dots,b_{q-1},e_q)=A(\lambda)$ 
(I suppose that $V=R^n\subset\C^n$. So, each vector is a vector column. $b_i$ is from Lemma~\ref{lemma.2}.) 
One has that $A(\lambda_0)\neq 0$ (general position).
Let $L^d(V_u)=\span(B_d)$, where $B_d$ is a basis of Lemma~\ref{lemma.2}. It follows that
\begin{equation}\label{Eq.*}
\det(w_1,\dots,w_{n-q},B_d)=A(\lambda)e^{id\phi(\lambda)}+A^*(\lambda)e^{-id\phi(\lambda)}+C(\lambda,d)=
2|A(\lambda)|\cos(d\phi(\lambda)+\phi_0)+C(\lambda,d) 
\end{equation}
where $C(\lambda,d)\to 0$ when $d\to \infty$. The r.h.s. of Eq.\ref{Eq.*} is a fast oscillating function. So,
for any $\epsilon>0$, one can find $d_0\in\N$ such that for any $d>d_0$ r.h.s of Eq.(*) change the sign on
$(\lambda_0-\epsilon,\lambda_0+\epsilon)$. It follows that the l.h.s. determinant is $0$ for some  
$\lambda\in (\lambda_0-\epsilon,\lambda_0+\epsilon)$. The zeros of the determinant correspond to nontrivial intersections
of $L^d(V_u)$ and $V_d$.
\end{proof}
  
\section{Some property of $P=(x+1)^n-\lambda x^{p}$}\label{Sec.P}
Critical values of $P$.

\begin{prop}\label{Prop.critical}
The only critical values are $\lambda=0$ (with all roots $x=-1$) and
$\lambda=\frac{n^n}{p^p(n-p)^{(n-p)}}$ with the only double root
$x=\frac{p}{n-p}$ 
\end{prop}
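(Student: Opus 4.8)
The plan is to compute the critical points of $P=(x+1)^n-\lambda x^p$ as a (multi-valued) function of $\lambda$ directly, by locating the points where two sheets of the Riemann surface collide, i.e. where $P$ has a multiple root. First I would recall that $\lambda$ is a critical value precisely when the system $P(x,\lambda)=0$, $\partial_x P(x,\lambda)=0$ has a common solution $x$ (a multiple root), together with the separate degenerate possibility of a root escaping to $x=0$ or $x=\infty$ (the points where the change of variables defining $\lambda$ as a function of $x$ is singular). Writing $\lambda=(x+1)^n/x^p$ for $x\neq 0$, the branch points over the $x$-line are exactly the solutions of $\frac{d\lambda}{dx}=0$, so I would differentiate: $\frac{d}{dx}\log\lambda=\frac{n}{x+1}-\frac{p}{x}=0$, which gives $nx=p(x+1)$, i.e. $x=\frac{p}{n-p}$. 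This is the only finite nonzero critical point, and substituting back yields $\lambda=\frac{(x+1)^n}{x^p}$ with $x+1=\frac{n}{n-p}$, hence $\lambda=\frac{(n/(n-p))^n}{(p/(n-p))^p}=\frac{n^n}{p^p(n-p)^{n-p}}$, matching the claimed value.

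Next I would handle the two boundary cases. At $x=0$: the equation $\lambda x^p=(x+1)^n$ forces $\lambda\to\infty$ as $x\to 0$ (for $p\ge 1$), so $x=0$ is not attained for finite $\lambda\neq\infty$; more precisely $p$ of the $n$ roots branch together only at $\lambda=\infty$, which is the point at infinity of the base and is excluded from the affine statement. At $\lambda=0$: then $(x+1)^n=0$, so all $n$ roots coincide at $x=-1$; this is a genuine critical value, which the statement records. I would then argue that these exhaust all critical values: away from $\lambda=0$ and $\lambda=\infty$ the map $x\mapsto\lambda=(x+1)^n/x^p$ is a finite map of Riemann surfaces whose ramification is governed solely by the vanishing of its derivative, and we have just shown $\frac{d\lambda}{dx}=0$ has the unique solution $x=\tfrac{p}{n-p}$, at which exactly two sheets meet (the zero of $\frac{d\lambda}{dx}=\lambda\cdot\frac{(n-p)x-p}{x(x+1)}$ is simple, since the numerator $(n-p)x-p$ is linear, so the ramification index there is $2$).

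The last point to nail down is the multiplicity claim — that over $\lambda=\frac{n^n}{p^p(n-p)^{n-p}}$ there is a single double root and no worse degeneration. This follows from the simplicity of the zero of $\frac{d\lambda}{dx}$ at $x=\tfrac{p}{n-p}$: if the root were triple we would need $\partial_x^2 P=0$ there as well, equivalently a double zero of $\frac{d\lambda}{dx}$, contradicting that $(n-p)x-p$ has a simple root. I expect the only mildly delicate step to be making rigorous the reduction "critical values of $P$ over $\C(\lambda)$ $=$ branch points of the covering $x\mapsto(x+1)^n/x^p$ together with $\{0,\infty\}$", i.e.\ correctly accounting for the behaviour at $x=0$ and $x=\infty$ where the naive formula $\lambda=(x+1)^n/x^p$ degenerates; everything else is the short explicit computation above. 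I would therefore organize the write-up as: (i) reduce to analyzing $\frac{d\lambda}{dx}$ for $x\neq 0,\infty$; (ii) solve $\frac{d\lambda}{dx}=0$ and evaluate $\lambda$; (iii) check $\lambda=0$ gives the $n$-fold root $x=-1$; (iv) observe $x=0,\infty$ contribute only at $\lambda=\infty$; (v) conclude via simplicity of the critical point that the root at $x=\tfrac{p}{n-p}$ is exactly double.
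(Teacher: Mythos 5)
Your proposal is correct and is precisely the ``direct computation'' that the paper invokes without writing out: solving $P=\partial_xP=0$ (equivalently $\frac{d}{dx}\log\lambda=\frac{n}{x+1}-\frac{p}{x}=0$) to get $x=\frac{p}{n-p}$ and $\lambda=\frac{n^n}{p^p(n-p)^{n-p}}$, plus the degenerate value $\lambda=0$ with the $n$-fold root $x=-1$. Your additional care about $x=0,\infty$ and about the simplicity of the critical point (hence exactly a double root) fills in details the paper omits entirely.
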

\begin{proof}
The proof is a direct computation.
\end{proof}

\begin{prop} \label{Prop.small.roots}
There exists a converging around $0$ series $f$, such that the roots of P are
$x_i=\gamma_i+\gamma_i^2f(\gamma)-1$ where $\gamma_i=\sqrt[n]{(-1)^p\lambda}$ (the $i$-th root)
and $|\lambda|$ small enough ($\gamma$ are inside the convergence ball of $f$).  
\end{prop}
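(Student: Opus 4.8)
The plan is to translate the degenerate root $x=-1$ to the origin and then invert a convenient analytic function. Put $x=y-1$, so that $(x+1)^n=y^n$ and $x^p=(y-1)^p=(-1)^p(1-y)^p$; the equation $P=0$ becomes
$$
y^n=(-1)^p\lambda\,(1-y)^p .
$$
For $|y|<1$ the function $(1-y)^{p/n}=\exp\!\bigl((p/n)\log(1-y)\bigr)$ is analytic and equals $1$ at $y=0$; set $h(y)=(1-y)^{p/n}$. If $|y|<1$ and $y^n=(-1)^p\lambda(1-y)^p$, then $\bigl(y/h(y)\bigr)^n=(-1)^p\lambda$, so $y/h(y)$ is one of the $n$ branches $\gamma_i=\sqrt[n]{(-1)^p\lambda}$. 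Conversely each such branch gives the equation
$$
y=\gamma\,h(y),\qquad h(0)=1 .
$$
Thus the small roots of $P$ are exactly $x=y-1$ with $y$ a small solution of $y=\gamma_i h(y)$ for some $i$.

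Next I would solve $y=\gamma h(y)$ for $y$ as a function of $\gamma$ near $\gamma=0$. The analytic map $(\gamma,y)\mapsto y-\gamma h(y)$ vanishes at $(0,0)$ with $y$-partial derivative equal to $1$ there, so by the analytic implicit function theorem (equivalently, Lagrange inversion applied to $\gamma=y/h(y)$) there is a unique analytic function $y(\gamma)$, convergent on some disc $|\gamma|<\delta$, with $y(0)=0$ and $y(\gamma)=\gamma+O(\gamma^2)$. Since $y(\gamma)-\gamma$ vanishes to order at least $2$, the quotient
$$
f(\gamma):=\frac{y(\gamma)-\gamma}{\gamma^2}
$$
extends to an analytic (hence convergent) function on $|\gamma|<\delta$, and then $x=y-1=\gamma+\gamma^2 f(\gamma)-1$, which is the asserted form.

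Finally I would check that for $|\lambda|$ small the $n$ numbers $x_i=\gamma_i+\gamma_i^2 f(\gamma_i)-1$, as $\gamma_i$ runs over the $n$ branches of $\sqrt[n]{(-1)^p\lambda}$, are precisely the $n$ roots of $P$. Each satisfies $P(x_i)=0$ by construction. They are pairwise distinct: $y(\gamma)=\gamma\bigl(1+\gamma f(\gamma)\bigr)$ has derivative $1$ at $0$, hence is injective for $|\gamma|$ small, and the $\gamma_i$ are distinct for $\lambda\neq 0$; since $P$ has degree $n$ (recall $p<n$), these exhaust its roots. Choosing $\delta$ small and then $|\lambda|$ small enough that all $|\gamma_i|<\delta$ finishes the argument.

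The only genuinely delicate point is the bookkeeping of branches: one must be sure that a \emph{single} fixed series $f$, obtained from the equation $y=\gamma h(y)$, simultaneously describes all $n$ roots when fed the $n$ different values $\gamma_i$. This is immediate once one notices that $f$ is determined by that equation alone and is insensitive to which branch of $\sqrt[n]{(-1)^p\lambda}$ was chosen; everything else is the routine verification that the implicit-function solution has the stated leading term and radius of convergence.
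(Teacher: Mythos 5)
Your proof is correct and is essentially the approach the paper intends: the paper's entire proof is the one-line remark ``we apply the Newton method,'' i.e.\ compute the Newton--Puiseux expansion of the roots in the local uniformizer $\gamma=\sqrt[n]{(-1)^p\lambda}$ near the degenerate point $x=-1$, which is exactly what your substitution $y=x+1$, the reduction to $y=\gamma\,(1-y)^{p/n}$, and the implicit function theorem carry out rigorously. Your careful handling of the branch bookkeeping and the count of the $n$ distinct roots supplies the details the paper omits.
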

\begin{proof}
We apply the Newton method to calculate roots of $P$ for small  $|\lambda|$. 
\end{proof}

We are interesting in small positive $\lambda$. Let $\lambda=\epsilon^n$, 
$\epsilon>0$. Then $x_j=\epsilon\exp(2i\pi j/n)-1+O(\epsilon^2)$ for even $p$ and
$x_j=\epsilon\exp(2i\pi (j+1/2)/n)-1+O(\epsilon^2)$ for odd $p$. It follows from
Proposition~\ref{Prop.small.roots} that $x_j\to x_{j+1}$ if $\lambda$ rotates around $0$.
It means that Galois group $\Gal(P,\C(\lambda))$ (that is the monodromy group in this case) 
contains cycle $(x_1,x_2,\dots,x_n)$.

\begin{prop}\label{Prop.unique.mod}
Let $\lambda\in \R$, $P(x)=0$, $P(y)=0$ with $|x|=|y|$. Then $x=y$ or $x=y^*$. 
\end{prop}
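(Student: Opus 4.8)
The plan is to work with the polynomial $P(x)=(x+1)^n-\lambda x^p$ for $\lambda\in\R$ and exploit that the coefficients are real, so complex conjugation permutes the roots; thus $y^*$ is automatically a root whenever $y$ is, and it suffices to show that among roots of equal modulus $y$ and $y^*$ are the only possibilities. Write $P(x)=0$ as $(x+1)^n=\lambda x^p$, and take absolute values: $|x+1|^n=|\lambda|\,|x|^p$. So if $|x|=|y|=:\rho$, then $|x+1|=|y+1|$ as well, because both equal $(|\lambda|\rho^p)^{1/n}$. Geometrically this says $x$ and $y$ lie on the intersection of two circles — the circle of radius $\rho$ about $0$ and the circle of radius $(|\lambda|\rho^p)^{1/n}$ about $-1$ — and two such circles meet in at most two points, which are complex conjugates of each other (since both circles are centered on the real axis). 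This already forces $x\in\{y,y^*\}$ \emph{provided} the two circles genuinely intersect transversally; the degenerate cases (the circles tangent, or one of $\rho$, $|\lambda|$ zero) must be checked separately but are easy: if $\lambda=0$ all roots equal $-1$; if $\rho=0$ then $x=y=0$, impossible since $P(0)=1$; tangency of the circles forces $x=y\in\R$.

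First I would record the two modulus identities $|x|=|y|$ and $|x+1|=|y+1|$ derived above. Then I would argue the ``two circles'' step cleanly in coordinates: writing $x=\rho e^{i\theta}$, the condition $|x+1|^2=\rho^2+2\rho\cos\theta+1$ depends only on $\cos\theta$, so $|x+1|=|y+1|$ together with $|x|=|y|$ gives $\cos\theta_x=\cos\theta_y$, hence $\theta_y=\pm\theta_x$, i.e.\ $y=x$ or $y=x^*=x^{*}$. (Here $x^*$ denotes the complex conjugate as used earlier in the paper.) This is essentially the whole argument; no Galois theory is needed, only reality of the coefficients to make the problem well-posed and the elementary observation that the level sets of $\theta\mapsto|x+1|$ on a fixed circle $|x|=\rho$ are conjugate pairs.

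The only real obstacle is bookkeeping the boundary cases so the statement is literally true as stated: one must confirm that when $\cos\theta_x=\cos\theta_y$ forces $\theta_x\in\{0,\pi\}$ (so $x$ is real and $x=x^*$), the conclusion $x=y$ or $x=y^*$ still holds trivially, and that $\lambda=0$ is consistent with the claim (all roots are $-1$, so indeed $x=y$). I expect the write-up to be short: state $(x+1)^n=\lambda x^p$, take moduli, deduce $\cos\theta_x=\cos\theta_y$, conclude. I would present it in that order, flagging at the end that the same computation shows more — namely that for real $\lambda$ the multiset of root-moduli determines the roots up to conjugation — which is exactly what Lemma~\ref{lemma.2} and Lemma~\ref{lemma.3} will want when they invoke the hypothesis $\alpha_{q+1}=\alpha_q^*$ and the modulus orderings.
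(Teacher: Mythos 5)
Your argument is correct and is essentially identical to the paper's proof: take moduli in $(x+1)^n=\lambda x^p$ to get $|x+1|=|y+1|$, write $x=\rho e^{i\phi}$, $y=\rho e^{i\psi}$, and conclude $\cos\phi=\cos\psi$, hence $y\in\{x,x^*\}$. The extra bookkeeping of degenerate cases you add is harmless but not needed beyond noting $P(0)=1$.
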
 
\begin{proof}
Indeed, $|x+1|^n=|\lambda|\cdot |x|^p=|y+1|^n$. It follows that $|x+1|^2=|y+1|^2$. Denoting
$x=\rho e^{i\phi}$ and $y=\rho e^{i\psi}$ one gets $\cos(\phi)=\cos(\psi)$. 
\end{proof}

Notice that $|x_j|^2=1-2\epsilon\cos(2\pi (j+(1-(-1)^k)/4)/n)+O(\epsilon^2)$. 
So, for small enough $\lambda=\epsilon^n$, one has the following ordering of $|x_j|$:
\begin{prop}\label{Prop.order}
\begin{itemize}
\item $|x_{\frac{n-1}{2}}|=|x_{-\frac{n-1}{2}}|>|x_{\frac{n-1}{2}-1}|=|x_{-\frac{n-1}{2}+1}|>\dots >|x_j|=|x_{-j}|>\dots >|x_0|$ if $p$ is even and $n$ is odd;
\item $|x_{\frac{n-1}{2}}|>|x_{\frac{n-1}{2}-1}|=|x_{-\frac{n-1}{2}}|>\dots >|x_j|=|x_{-j-1}|>\dots >|x_0|=|x_{-1}|$ if $p$ is odd and $n$ is odd; 
\item $|x_{\frac{n}{2}-1}=|x_{\frac{n}{2}}|>|x_{\frac{n}{2}-2}=|x_{-\frac{n}{2}+1}|>\dots >|x_j|=|x_{-j-1}|>\dots >|x_0|=|x_{-1}|$ if $p$ is odd and $n$ is even.
\end{itemize}
From Proposition~\ref{Prop.critical} and Proposition~\ref{Prop.unique.mod} it follows that this order is conserved for 
$\lambda\in (0,\frac{n^n}{p^p(n-p)^{(n-p)}})$.
\end{prop}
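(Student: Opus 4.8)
The plan is to establish the orderings by a first-order (in $\epsilon$) computation of the moduli $|x_j|$, and then to argue that the strict inequalities cannot be destroyed as $\lambda$ moves through the interval $(0,\frac{n^n}{p^p(n-p)^{n-p}})$. For the first part I would start from Proposition~\ref{Prop.small.roots}, which gives $x_j=\gamma_j+O(\gamma_j^2)$ with $\gamma_j=\sqrt[n]{(-1)^p\lambda}$; writing $\lambda=\epsilon^n$ this is $x_j=\epsilon\,\zeta_j-1+O(\epsilon^2)$ where $\zeta_j=\exp(2\pi i j/n)$ for even $p$ and $\zeta_j=\exp(2\pi i(j+\tfrac12)/n)$ for odd $p$. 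Then $|x_j|^2=(x_j+1)(x_j^*+1)-\ldots$; more directly, $|x_j|^2=|\epsilon\zeta_j-1|^2+O(\epsilon^2)=1-2\epsilon\,\Re(\zeta_j)+O(\epsilon^2)$, which is exactly the formula $|x_j|^2=1-2\epsilon\cos\!\big(2\pi(j+(1-(-1)^p)/4)/n\big)+O(\epsilon^2)$ already displayed before the proposition. So for $\epsilon$ small the ordering of the $|x_j|$ is governed by the ordering of $-\cos\big(2\pi(j+\delta)/n\big)$ with $\delta\in\{0,\tfrac14\}$, i.e. by how close the argument is to $\pi$. A routine case check on the parities of $n$ and $p$ — which residues $j$ make $2\pi(j+\delta)/n$ closest to $\pi$, and the pairing $j\leftrightarrow -j$ or $j\leftrightarrow -j-1$ coming from $\cos$ being even — yields precisely the three chains listed, together with the fact that all the displayed inequalities are \emph{strict} (no two distinct $\cos$ values coincide except through the stated pairings, because $2(j+\delta)\not\equiv \pm 2(k+\delta)\bmod n$ for the relevant ranges).

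For the second part — persistence of the ordering on the whole interval — the key point is a continuity-plus-no-collision argument. The roots $x_j(\lambda)$ depend continuously (indeed analytically away from critical values) on $\lambda$, and by Proposition~\ref{Prop.critical} there are no critical values in the open interval $(0,\frac{n^n}{p^p(n-p)^{n-p}})$, so the roots stay distinct and vary analytically there. Hence each $|x_j(\lambda)|$ is continuous on the interval. If the ordering of the moduli changed somewhere in the interval, then at the first such $\lambda$ two of the quantities $|x_j|$ that are strictly ordered near $0$ would become equal, say $|x_j(\lambda)|=|x_k(\lambda)|$ with $x_j\ne x_k$. But Proposition~\ref{Prop.unique.mod} says that for real $\lambda$, equality of moduli of two roots forces $x_k=x_j$ or $x_k=x_j^*$. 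Since the roots are distinct throughout the interval, this means $x_k=x_j^*$ — but that is exactly one of the pairings already present in the chains (the pairs $|x_j|=|x_{-j}|$ or $|x_j|=|x_{-j-1}|$ are the complex-conjugate pairs, as one checks from the first-order description $x_j\approx\epsilon\zeta_j-1$ with $\zeta_{-j}=\zeta_j^*$, resp. $\zeta_{-j-1}=\zeta_j^*$). So the only "collisions" of moduli that ever occur are the ones built into the statement, and no genuine reordering of the \emph{strict} part of the chain can take place. Therefore the ordering is preserved on all of $(0,\frac{n^n}{p^p(n-p)^{n-p}})$.

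I expect the main obstacle to be purely bookkeeping: getting the index pairing right in each of the three parity cases and confirming strictness. Concretely, one must check that the map $j\mapsto$ (distance of $2\pi(j+\delta)/n$ to $\pi$, modulo $2\pi$) is injective on a set of representatives once we quotient by the reflection $j\mapsto -2\delta-j$, and that the reflection partner of $j$ is $-j$ when $p$ is even and $-j-1$ when $p$ is odd; the three displayed chains are then just this data spelled out for $(n\text{ odd},p\text{ even})$, $(n\text{ odd},p\text{ odd})$, $(n\text{ even},p\text{ odd})$. (The case $n$ even, $p$ even is excluded since then $\gcd(n,p)\ge 2$ and the relevant statements are about $x_j$ with $j$ ranging over a full residue system; it does not arise in the ordering claims used later.) Once the base case at small $\lambda$ is pinned down, the persistence argument above is short and uses only Propositions~\ref{Prop.critical} and~\ref{Prop.unique.mod}. $\qquad\blacksquare$
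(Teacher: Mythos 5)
Your proposal is correct and follows essentially the same route as the paper, which justifies the small-$\lambda$ ordering by the expansion $|x_j|^2=1-2\epsilon\cos\bigl(2\pi(j+(1-(-1)^p)/4)/n\bigr)+O(\epsilon^2)$ and then invokes Propositions~\ref{Prop.critical} and~\ref{Prop.unique.mod} for persistence on the whole interval. You merely spell out more carefully than the paper the continuity/no-collision argument (including why the conjugate pairing of indices cannot change) and the parity bookkeeping, both of which the paper leaves implicit.
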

\begin{prop}\label{Prop.arg}
Let $x=\rho e^{i\phi}\not\in\R$ be a root of $P$, and $\lambda\in\R$. Then $\frac{d\phi}{d\lambda}\neq 0$. 
\end{prop}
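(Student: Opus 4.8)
The plan is to derive a closed formula for $\frac{d\phi}{d\lambda}$ by implicit differentiation of $(x+1)^n=\lambda x^p$ and to observe that its sign is governed entirely by $\IM(x)$, which is nonzero by hypothesis. First note that the derivative makes sense: since $x\notin\R$, Proposition~\ref{Prop.critical} says $x$ is not a critical point of $P$ (both critical points $-1$ and $\frac{p}{n-p}$ are real), so near the given point $x=x(\lambda)$ is an analytic function of $\lambda$; moreover $\lambda\neq 0$ (otherwise $x=-1$) and $x\neq 0,-1$. Since $x(\lambda)$ stays off the real axis near the point, we may write $x=\rho e^{i\phi}$ with $\rho,\phi$ real and smooth, and then $\frac{1}{x}\frac{dx}{d\lambda}=\frac{\rho'}{\rho}+i\phi'$, so that $\frac{d\phi}{d\lambda}=\IM\!\left(\frac{1}{x}\frac{dx}{d\lambda}\right)$.

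Next I would differentiate $(x+1)^n=\lambda x^p$ in $\lambda$, obtaining $\left(n(x+1)^{n-1}-\lambda p x^{p-1}\right)\frac{dx}{d\lambda}=x^p$. Using $(x+1)^n=\lambda x^p$ again to substitute $n(x+1)^{n-1}=\frac{n\lambda x^p}{x+1}$ and $\lambda p x^{p-1}=\frac{\lambda p x^p}{x}$, the coefficient of $\frac{dx}{d\lambda}$ simplifies to $\lambda x^p\cdot\frac{(n-p)x-p}{x(x+1)}$, whence
$$\frac{1}{x}\frac{dx}{d\lambda}=\frac{x+1}{\lambda\left((n-p)x-p\right)}.$$
Here $(n-p)x-p\neq 0$: for $n\neq p$ this would force $x=\frac{p}{n-p}\in\R$, and for $n=p$ it equals $-p\neq0$.

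Finally, since $\lambda\in\R$, taking imaginary parts yields
$$\frac{d\phi}{d\lambda}=\frac{1}{\lambda\,|(n-p)x-p|^2}\,\IM\!\left((x+1)\bigl((n-p)x^*-p\bigr)\right).$$
Expanding $(x+1)\bigl((n-p)x^*-p\bigr)=(n-p)|x|^2-p+\bigl((n-p)x^*-px\bigr)$, the first two summands are real and the imaginary part of the third is $-(n-p)\IM(x)-p\IM(x)=-n\IM(x)$. Therefore $\frac{d\phi}{d\lambda}=\frac{-n\,\IM(x)}{\lambda\,|(n-p)x-p|^2}$, which is nonzero because $n\ge 1$, $\lambda\neq 0$ and $\IM(x)\neq 0$. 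There is no genuine obstacle in this argument; the only steps needing care are justifying that $x$ is an honest smooth function of $\lambda$ near the point and that the denominator $\lambda\bigl((n-p)x-p\bigr)$ never vanishes at a non-real root, both of which follow from Proposition~\ref{Prop.critical}.
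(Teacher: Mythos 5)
Your proof is correct and follows essentially the same route as the paper: reduce to showing $\frac{1}{x}\frac{dx}{d\lambda}=\frac{x+1}{\lambda((n-p)x-p)}\notin\R$ for $x\notin\R$, via the same implicit differentiation. You actually supply the detail the paper only asserts, namely the explicit computation $\IM\bigl((x+1)((n-p)x^*-p)\bigr)=-n\,\IM(x)\neq 0$, so your write-up is, if anything, more complete.
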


\begin{proof}
$\frac{d\phi}{d\lambda}=0$ if and only if $\frac{1}{x}\frac{dx}{d\lambda}\in\R$. Direct calculations show
$$
\frac{dx}{d\lambda}=\frac{x(x+1)}{\lambda((n-p)x-p)},
$$
but $\frac{x+1}{\lambda((n-p)x-p)}\in\R$  if and only if $x\in\R$ (at least for $p\leq n$).
\end{proof}

Now we formulate our result on the Galois group of $P$. It will be proved in the next section.
\begin{theorem}\label{Th.galois}
Let $P=(x+1)^n-\lambda x^p$, $n>p$, $\gcd(n,p)=1$. Then $\Gal(P,\C(\lambda))=S_n$. Where $S_n$ is a group of all
permutations of $n$ elements.
\end{theorem}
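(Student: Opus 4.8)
The plan is to combine the combinatorial growth estimate of Theorem~\ref{Th.spectrum} with the structural lemmas of Section~\ref{Sec.subspace} and the concrete facts about $P$ collected in Section~\ref{Sec.P}. First I would recall the constraints on the monodromy group $G=\Gal(P,\C(\lambda))$ already established: by the discussion following Proposition~\ref{Prop.small.roots}, $G$ contains an $n$-cycle $c=(x_1,\dots,x_n)$, so $G$ is a transitive subgroup of $S_n$ containing an $n$-cycle. The goal is to show $G=S_n$, and with $\gcd(n,p)=1$ the natural strategy is to produce an element of $G$ whose cycle type is incompatible with every proper transitive subgroup of $S_n$ that contains an $n$-cycle (or, more directly, to show $G$ is $2$-transitive and contains a transposition, or to rule out imprimitivity and then use a CFSG-free argument).

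The key device is the branching/monodromy around the finite critical value $\lambda_* = \frac{n^n}{p^p(n-p)^{n-p}}$ identified in Proposition~\ref{Prop.critical}, which has a single double root. Looping around $\lambda_*$ gives a transposition $\tau\in G$. So $G$ is a transitive subgroup of $S_n$ containing an $n$-cycle and a transposition; it is a classical fact that such a group, if it is primitive, must be all of $S_n$ (for an $n$-cycle guarantees transitivity, a transposition together with primitivity forces $S_n$). Hence the whole problem reduces to showing $G$ is \emph{primitive}, i.e. that the blocks of a hypothetical block system are trivial. This is where I expect the real work — and the real use of the combinatorics — to lie.

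To rule out imprimitivity I would argue as follows. Suppose $G$ preserves a partition of $\{x_1,\dots,x_n\}$ into $t$ blocks of size $n/t$, with $1<t<n$. Because $G$ contains the $n$-cycle $c$, the blocks must be the residue classes mod $t$ of the cyclic order, so $t\mid n$. The transposition $\tau$ coming from the double root must then swap two roots lying in the same block; equivalently $n/t\ge 2$. The idea is to use the explicit asymptotics of the roots for small $\lambda$ (Proposition~\ref{Prop.small.roots}, the expansion $x_j=\epsilon e^{2\pi i(j+\star)/n}-1+O(\epsilon^2)$) together with the ordering of the moduli $|x_j|$ in Proposition~\ref{Prop.order} to locate precisely which roots the double root at $\lambda_*$ ties together; since $\gcd(n,p)=1$, a parity/divisibility computation on the indices should show the two colliding roots lie in \emph{different} residue classes mod $t$ for every $t\mid n$ with $t>1$, contradicting that $\tau$ lives inside a block. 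If a purely index-theoretic argument is awkward, the fallback is the analytic route: invoke Lemma~\ref{lemma.3} (with $L=M(r)$, $V_u=S_u$-space, $V_d=S_d$-space, using Proposition~\ref{Prop.arg} to get $d\phi/d\lambda\ne0$ and Theorem~\ref{Th.spectrum} to get the spectral points accumulating in $(0,\lambda_*)$) to force a nontrivial intersection $L^d(V_u)\cap V_d\ne\{0\}$, which by Lemma~\ref{lemma.1} is impossible unless the associated $\det(SX)\equiv 0$ — and that identity, tied back to the $\rank$ conditions on $S_u,S_d$, contradicts $G$ being imprimitive because an invariant block system would make those minors vanish identically while primitivity keeps them generically nonzero.

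Once primitivity is in hand, conclude: $G$ is a primitive subgroup of $S_n$ containing a transposition, hence $G=S_n$. The main obstacle, as indicated, is the imprimitivity step — specifically matching the combinatorial content of Theorem~\ref{Th.spectrum} (that $M(r)$ has many eigenvalues below $\tilde e_\alpha^{\,n}=n^n/(p^pq^q)$) to a genuine algebraic obstruction for block systems via Lemmas~\ref{lemma.1} and~\ref{lemma.3}; getting the bookkeeping of which subspaces $S_u,S_d$ correspond to a putative block, and checking the general-position hypotheses of Lemma~\ref{lemma.3} hold at the relevant $\lambda_0$, is the delicate part. The case $\gcd(n,p)=k>1$ (the normal subgroup $(S_{n/t})^t$ claim) would then follow by the same analysis applied block-by-block, since the $n$-cycle still acts and the critical-value transpositions still live within the natural $k$-block system.
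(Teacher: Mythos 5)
Your overall architecture matches the paper's: the $n$-cycle from the monodromy at $\lambda=0$, a transposition from the simple double root at $\lambda_*=\frac{n^n}{p^pq^q}$ (Proposition~\ref{Prop.critical}), and then elementary permutation group theory. Your group-theoretic finish (blocks of an imprimitivity system must be residue classes mod $t$ for the $n$-cycle, the transposition must act inside a block, hence $t\mid p$ and $t\mid n$, hence $t=1$; then primitive $+$ transposition gives $S_n$) is a correct, essentially equivalent repackaging of the paper's ``$\langle (0,1,\dots,n-1),(a,a+p)\rangle=S_n$ when $\gcd(n,p)=1$.''

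The genuine gap is the step you flag but do not carry out: identifying \emph{which} two roots collide at $\lambda_*$, i.e.\ showing the transposition is $(x_a,x_{a+p})$ in the cyclic labeling. Your first route (``a parity/divisibility computation on the indices'' from Propositions~\ref{Prop.small.roots} and~\ref{Prop.order}) cannot work on its own: those propositions give the ordering of the moduli $|x_j|$ on $(0,\lambda_*)$, but say nothing about which conjugate pair degenerates to the real double root at $\lambda_*$ --- every conjugate pair is a priori a candidate. Your fallback route inverts the paper's logic and misuses the lemmas: Lemma~\ref{lemma.1} is only used to verify the general-position hypotheses of Lemma~\ref{lemma.3}, and a nontrivial intersection $L^d(V_u)\cap V_d\neq\{0\}$ is not a contradiction but precisely an eigenvalue of $M(r)$ (Proposition~\ref{Prop.reduce}); there is no mechanism by which an invariant block system would make minors vanish. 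What the paper actually does is the reverse implication: Inequality~(\ref{ineq1}) caps the spectrum of $M(r)$ at $\frac{n^n}{p^pq^q}$, and Lemma~\ref{lemma.3} would manufacture eigenvalues near any $\lambda_0$ where its hypotheses hold, so the hypotheses must fail for $\lambda\ge\frac{n^n}{p^pq^q}$. By Propositions~\ref{Prop.critical}, \ref{Prop.unique.mod}, \ref{Prop.order} and \ref{Prop.arg}, the only hypothesis that can fail at $\lambda_*$ is that $(\alpha_q,\alpha_{q+1})$ stay a non-real conjugate pair; hence the colliding pair is the one at modulus-ranks $q,q+1$, which Proposition~\ref{Prop.order} identifies as $(x_{p/2},x_{-p/2})$ (resp.\ $(x_{(p-1)/2},x_{-(p-1)/2-1})$ for odd $p$) --- indices differing by exactly $p$. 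This is the one place where the combinatorics enters the Galois computation, and without it your proof does not close.
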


\section{Proofs of Theorem~\ref{Th.spectrum} and Theorem~\ref{Th.galois}}\label{Sec.proof1}

We start with a proof of Theorem~\ref{Th.spectrum}.  We show that for any $0<\lambda_0<\frac{n^n}{p^pq^q}$
and any $\epsilon>0$ and large enough $r$ there exists a nontrivial solution to  
\begin{equation}\label{Eq.spectral}
Mv=\lambda v
\end{equation}
for 
$\lambda \in (\lambda_0-\epsilon,\lambda_0+\epsilon)$. We reduce this problem to the application of Lemma~\ref{lemma.3}.  
Define $x^{[0:2r-1]}=(1,x,...,x^{2r-1})^t$. Let $x_i$ be the roots of $(x+1)^n-\lambda x^p$. All of them are different if 
$0<\lambda<\frac{n^n}{p^pq^q}$. Notice that $(Mv-\lambda v)[q+1,2r-p]=0$ if and only if $v$ is a linear combinations of 
$x_i^{[0:2r-1]}$. 
So, we only need to
find $\lambda$ and linear combinations of $x_i^{[0:2r-1]}$ to satisfy the boundary conditions,
that is, to find $\alpha_1,\alpha_2,\dots,\alpha_n$ and $\lambda$ such that
$(S_u-\lambda)(\alpha_1x_1^{[0:n-1]}+\dots+\alpha_nx_n^{[0:n-1]})=0$ and
$(S_d-\lambda)(\sum x_i^{2r-n}\alpha_ix_i^{[0:n-1]})=0$. Where I use the inclusions $\R\to\Mat_{p,n}$  ( $\R\to\Mat_{q,n}$) 
defined as
$$
\lambda\to\left(\begin{array}{cccc} \lambda & 0 & \dots & 0\\
                                      0 &\lambda & \dots & 0\\
                                    \vdots & \vdots &\vdots &0\\
                \end{array}\right) 
$$  
One can consider $S_u-\lambda$ as a linear operator $\R^n\to\R^p$ and $S_d-\lambda$ as a linear operator
$\R^n\to\R^q$. Let the linear operator $L:\R^n\to\R^n$ be defined as $x_i^{[0:n-1]} \to x_i\cdot x_i^{[0,n-1]}$.
Let $V_u=\Ker(S_u-\lambda)$ and $V_d=\Ker(S_d-\lambda)$. 
Notice that $\rank(S_u-\lambda)=p$ ($\rank(S_d-\lambda)=q$) over $\C(\lambda)$. So, $\dim(V_u)=q$ ($\dim(V_d)=p$) for all
but finite exceptions $\lambda$.  
The following proposition is standard:
\begin{prop}\label{Prop.reduce}
Eq.\ref{Eq.spectral} has a nontrivial solution if and only if $L^d(V_u)\cap V_d\neq\{0\}$ with $d=2r-n$.
\end{prop}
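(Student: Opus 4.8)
The plan is to verify the equivalence in Proposition~\ref{Prop.reduce} by unwinding the definitions and using the block structure of $M$ described before Theorem~\ref{Th.spectrum}. First I would record what it means for $v\in\R^{2r}$ to satisfy $Mv=\lambda v$. Splitting the index set $\{1,\dots,2r\}$ into the top block $\{1,\dots,q\}$, the middle block $\{q+1,\dots,2r-p\}$, and the bottom block $\{2r-p+1,\dots,2r\}$, the equation $Mv=\lambda v$ decomposes into three groups of scalar equations. On the middle block, the coefficients $M_{k,j}=C^{n}_{k-j+p}$ are exactly those of $(E+N_-)^{p}(E+N_+)^{q}$ acting in the interior, so the condition $(Mv-\lambda v)[q+1:2r-p]=0$ is a linear recurrence whose characteristic equation, after clearing denominators, is precisely $(x+1)^n-\lambda x^p=0$; hence the middle-block equations hold exactly when $v$ lies in the span of the vectors $x_i^{[0:2r-1]}$, $i=1,\dots,n$. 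This is the step that needs the most care, because one must check that the recurrence really has $(x+1)^n=\lambda x^p$ as characteristic polynomial and that all $n$ sequences $x_i^{[0:2r-1]}$ are linearly independent for $0<\lambda<n^n/(p^pq^q)$, which follows from the roots $x_i$ being distinct (Proposition~\ref{Prop.critical}) via a Vandermonde-type argument.

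Next I would translate the remaining (boundary) equations. Writing a general middle-block solution as $v=\sum_{i=1}^n\alpha_i x_i^{[0:2r-1]}$, the top $q$ rows of $Mv-\lambda v$ only involve the first $n$ coordinates of $v$, i.e.\ $\sum_i\alpha_i x_i^{[0:n-1]}$, and by construction of $S_u$ they read $(S_u-\lambda)\big(\sum_i\alpha_i x_i^{[0:n-1]}\big)=0$; likewise the bottom $p$ rows involve only the last $n$ coordinates, which are $x_i^{2r-n}\cdot x_i^{[0:n-1]}$ (here one uses $x_i^{[k:k+n-1]}=x_i^k\,x_i^{[0:n-1]}$), giving $(S_d-\lambda)\big(\sum_i\alpha_i x_i^{2r-n} x_i^{[0:n-1]}\big)=0$. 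Now identify $\R^n$ with coefficient vectors $(\alpha_1,\dots,\alpha_n)$ through the basis $x_i^{[0:n-1]}$ — equivalently, think of $\R^n$ as spanned by the $x_i^{[0:n-1]}$ inside $\R^n$; this is legitimate since these $n$ vectors are independent. Under this identification $L$ is multiplication of the $i$-th coordinate by $x_i$, so $L^d$ with $d=2r-n$ sends $\sum\alpha_i x_i^{[0:n-1]}$ to $\sum\alpha_i x_i^{2r-n} x_i^{[0:n-1]}$. Thus the top equation says the vector lies in $V_u=\Ker(S_u-\lambda)$ and the bottom equation says its image under $L^d$ lies in $V_d=\Ker(S_d-\lambda)$.

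Putting these together: a nontrivial $v$ solving $Mv=\lambda v$ exists if and only if there is a nonzero $u\in\R^n$ (namely $u=\sum\alpha_i x_i^{[0:n-1]}$) with $u\in V_u$ and $L^d u\in V_d$, i.e.\ if and only if $L^d(V_u)\cap V_d\neq\{0\}$. For the converse direction one simply runs the argument backwards: given $0\neq u\in V_u$ with $L^d u\in V_d$, the vector $v\in\R^{2r}$ whose entries are the first $2r$ terms of the linear combination of the sequences $x_i^{[0:\infty)}$ determined by $u$ satisfies all three blocks of $Mv=\lambda v$ and is nonzero since $u\neq 0$ determines its first $n$ coordinates. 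The only genuine subtlety, and the main obstacle, is the bookkeeping for the finitely many exceptional $\lambda$: one needs $0<\lambda<n^n/(p^pq^q)$ so that the $x_i$ are distinct (giving the independence of the $x_i^{[0:n-1]}$ and the correctness of the dimension count $\dim V_u=q$, $\dim V_d=p$), and one must note that the stated equivalence, and its use in Lemma~\ref{lemma.3}, is only needed on such $\lambda$, so discarding the finitely many bad points is harmless.
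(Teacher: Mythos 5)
Your proof is correct and follows exactly the route the paper intends: the paper itself omits the argument (calling the proposition ``standard''), but the paragraph preceding it sets up precisely your decomposition into the middle-block recurrence with characteristic polynomial $(x+1)^n-\lambda x^p$, the span of the $x_i^{[0:2r-1]}$, and the two boundary conditions identified with $V_u$ and $L^{2r-n}(V_u)\cap V_d$. The only deferred computation (that the interior rows $M_{k,j}=C^{p+q}_{k-j+p}$ really give that characteristic equation, up to the paper's own $p\leftrightarrow q$ bookkeeping) is routine and you correctly flag it, so nothing essential is missing.
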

So, in order to finish the proof we show that $L$, $V_u$, $V_d$ satisfy Lemma~\ref{lemma.3} for all, but some
finite exceptions, $0<\lambda_0<\frac{n^n}{p^pq^q}$.
\begin{itemize}
\item The first item is fulfilled by Proposition~\ref{Prop.order}.
\item The second item is fulfilled by Proposition~\ref{Prop.arg}.
\item The third and fourth items are fulfilled thanks to Lemma~\ref{lemma.1}. 
\end{itemize}

Now, let us consider Theorem~\ref{Th.galois}. By Inequality~(\ref{ineq1}), $M$ has no spectral values $>\frac{n^n}{p^pq^q}$.
It means, that $\lambda\geq \frac{n^n}{p^pq^q}$ the conditions of Lemma~\ref{lemma.3} are no more fulfilled. Using this one can easily
figure out that pair of roots collides for $\lambda=\frac{n^n}{p^pq^q}$. These are $(x_{\frac{p}{2}},x_{-\frac{p}{2}})$ for even $p$ and
$(x_{\frac{p-1}{2}},x_{-\frac{p-1}{2}-1})$ for odd $p$. It means that the  transposition  $(x_{\frac{p}{2}},x_{-\frac{p}{2}})\in\Gal(P,C(\lambda))$
($(x_{\frac{p-1}{2}},x_{-\frac{p-1}{2}-1})\in\Gal(P,C(\lambda))$). So, $\Gal(P,C(\lambda))$ (which is the same as the monodromy group) is generated by
a pair of permutations $(0,1,2,\dots,n-1)$ and $(a,a+p)$. One can check that this is $S_n$ if $\gcd(p,n)=1$. If $\gcd(p,n)=t$ then
$\Gal(P,C(\lambda))=\langle (1,2,\dots,n) (1,1+t)  \rangle$ is an extension of $C_t$ (the cyclic group of order $t$) by 
$S_{\frac{n}{t}}^t$.

\section{Asymptotic for 2-variable generating functions.} \label{Sec.asymptotic}
I rewrite Theorem~1.3 of \cite{Pemantle_preprint}, see also
\cite{P1,P2}

Let
$$
G(x,y)=\frac{P(x,y)}{D(x,y)}=\sum_{r=0,s=0}^\infty a_{rs}x^ry^s,\;\;
a_{rs}\geq 0
$$

\begin{theorem}\label{th_comb1}
\begin{enumerate}
\item For each positive $(r,s)$ (in the positive octant), there is a unique
positive solution $(x,y)$ of the system
\begin{eqnarray}
D &= & 0\\
s x\frac{\partial D}{\partial x} &=& r y\frac{\partial
D}{\partial y}
\end{eqnarray}
(clearly, the solution depends on the direction $s/r$, but not the
absolute value of $(s,r)$.)
\item With $(x,y)$ being the solution defined above,
if $P(x,y)\not=0$,
$$
a_{rs}\sim f_{rs}= \frac{P(x,y)}{\sqrt{2\pi}}x^{-r}y^{-s}
\sqrt{\frac{-yD_y}{sQ(x,y)}}
$$
uniformly over compact cones of $(r,s)$ for which $(x,y)$ is a
smooth point of the manifold $D=0$. (This means that if $(r,s)$ is
in the cone then $|1-a_{rs}/f_{rs}|\leq \epsilon(\sqrt{s^2+r^2})$
for some $\epsilon(n)$, $\epsilon(n)\to 0$ when $n\to\infty$.)
Where

\noindent 
$Q(x,y)=-xD_x(yD_y)^2-yD_y(xD_x)^2-[(yD_y)^2x^2D_xx+(xD_x)^2y^2D_yy-2xD_xyD_yxyD_{xy}]$

\end{enumerate}
\end{theorem}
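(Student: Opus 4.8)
The plan is to prove this by the several-variable analytic combinatorics method of \cite{P1,P2,Pemantle_preprint}: read $a_{rs}$ off the two-variable Cauchy integral, collapse one of the two integrations to a residue along the polar curve $D=0$, and evaluate the remaining one-dimensional integral by Laplace's method.

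First I would settle part (1) by convexity of the logarithmic domain of convergence. Since $a_{rs}\ge 0$, the set $E=\{(u,v)\in\R^2:\sum_{r,s}a_{rs}e^{ru+sv}<\infty\}$ is convex and closed under decreasing either coordinate, so near a smooth point of $D=0$ its boundary is a convex arc, and as one traverses the ``north-east'' part of $\partial E$ the outward normal direction sweeps monotonically over the whole open first quadrant of directions. Implicit differentiation of $D(e^u,e^v)=0$ shows that the outward normal at a boundary point $(x,y)=(e^u,e^v)$ is parallel to $(xD_x,\,yD_y)$; hence for each direction $(r,s)$ there is exactly one such boundary point whose normal is parallel to $(r,s)$, and ``parallel to $(r,s)$'' is exactly the equation $s\,xD_x=r\,yD_y$. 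Thus the stated system has a unique positive solution, it is the minimal point of $D=0$ in the direction $s/r$, and it depends only on $s/r$.

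For part (2) I would start from
$$
a_{rs}=\frac{1}{(2\pi i)^2}\oint_{|x|=x_0}\oint_{|y|=y_0}\frac{P(x,y)}{D(x,y)}\,\frac{dy\,dx}{x^{r+1}\,y^{s+1}},
$$
with $(x_0,y_0)$ the critical point of part (1). Deforming the $y$-contour past the polar curve in a neighbourhood of $(x_0,y_0)$ (the Pemantle-Wilson surgery, legitimate since $(x_0,y_0)$ is a smooth minimal point and $P(x_0,y_0)\neq0$) picks up the residue of $G$ along $D=0$ and leaves an exponentially smaller remainder, so
$$
a_{rs}\ \sim\ \frac{1}{2\pi i}\oint_{|x|=x_0}\frac{P(x,y(x))}{x^{r+1}\,y(x)^{s+1}\,D_y(x,y(x))}\,dx ,
$$
where $y(x)$ is the branch of $D=0$ through $(x_0,y_0)$. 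Writing $x=x_0e^{i\theta}$ turns the right-hand side into $\frac{1}{2\pi}\int_{-\pi}^{\pi}A(\theta)e^{h(\theta)}\,d\theta$ with $A(\theta)=P/(yD_y)$ and $h(\theta)=-r\log(x_0e^{i\theta})-s\log y(x_0e^{i\theta})$ evaluated along the branch. By part (1) one has $h'(0)=0$, and the convexity above gives $\Re h(\theta)<\Re h(0)$ for $0<|\theta|\le\pi$ with a genuine quadratic maximum at $\theta=0$, so Laplace's method yields $a_{rs}\sim A(0)e^{h(0)}/\sqrt{2\pi\,(-h''(0))}$. Here $e^{h(0)}=x_0^{-r}y_0^{-s}$ and $A(0)=P/(yD_y)$, and a standard (if lengthy) second-order computation — differentiate $y'=-D_x/D_y$ once more and use $s\,xD_x=r\,yD_y$ to eliminate $r$ — identifies $-h''(0)$ with the rational combination of the first and second partials of $D$ that enters the definition of $Q$, whence the Gaussian factor reproduces the $\sqrt{-yD_y/(sQ(x,y))}$ in $f_{rs}$ and $a_{rs}\sim f_{rs}$.

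The hard part is the uniformity over a compact cone of directions together with the rigorous contour surgery. One must check that $(x_0,y_0)$, the amplitude $A$, and $h''(0)$ vary continuously (indeed real-analytically) with $s/r$ and stay away from degeneracy on the cone; that the tail $|\theta|\ge\delta$ is uniformly exponentially small, which follows from a uniform strict-convexity estimate $\Re h(\theta)\le\Re h(0)-c\theta^2$ holding throughout the cone; and that both contours can be moved to the distinguished radii without meeting any other singularity of $G$, which uses minimality of $(x_0,y_0)$ and $P(x_0,y_0)\neq0$. These uniform estimates are exactly what produces the error bound $|1-a_{rs}/f_{rs}|\le\epsilon(\sqrt{r^2+s^2})$ with $\epsilon(n)\to0$; they are carried out in \cite{Pemantle_preprint}.
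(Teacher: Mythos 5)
The paper does not actually prove this theorem: it is quoted as Theorem~1.3 of \cite{Pemantle_preprint} (``I rewrite Theorem~1.3 of \cite{Pemantle_preprint}''), so the only argument on offer in the paper is the citation itself. Your outline --- the two-variable Cauchy integral, the residue surgery onto the polar curve at the minimal critical point determined by $D=0$ and $s\,xD_x=r\,yD_y$, and then Laplace's method with the uniformity over compact cones deferred to the references --- is precisely the Pemantle--Wilson argument that the citation points to, so it matches the intended proof.
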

Particularly, it implies that
$\lim\limits_{n\to\infty}\sqrt[n]{a_{\fl{\alpha n},\fl{(1-\alpha)n}}}=x^{-\alpha}y^{-(1-\alpha)}$. In our case
$$
G(x,y)=\sum_{r=0,s=0}^\infty C_{r}^{r+s}x^ry^s=\frac{1}{1-x-y},
$$
and $x(\alpha)=\alpha$ and $y(\alpha)=1-\alpha$.
\section{Concluding remarks}\label{Sec.remarks}

We may consider the similar problem in more general situation. Let $\Gamma$ be a graph.
Let $\Gamma=\Gamma_0\cup\Gamma_1$. We are interesting in paths $p$ of the graph. 
Let $|p|$ be the length of $p$ and $|p|_i$ be the number of transitions (edges) of $p$
from the graph $\Gamma_i$. Let $P$ be the set of paths of $\Gamma$ and
$$
P_{n,\alpha,r}=\{p\in P\;\;|\;\;|p|=n,\;\forall k \;\alpha k-r<|\,p[:k]\,|\leq\alpha k+r\}.
$$  

One may ask the same questions about $|P_{n,\alpha,r}|$. Some part of our consideration
may be easily generalized for this new situation. For example, the matrix $M$ become 
a product of $N_+\otimes A_1+E\otimes A_2$ and $E\otimes A_1+N_-\otimes A_2$, where $A_i$ is
the incidence matrix of $\Gamma_i$. 

In general, one can redefine $e_{\alpha,r}$ and $\tilde e_r$ for this situation.

\begin{question}
Is it true that for any $\Gamma=\Gamma_1\cup\Gamma_2$ one has that $\lim\limits_{r\to\infty} e_{\alpha,r}=\tilde e_r$.
\end{question}
If the answer is ``yes'' it may happens that it is more easy to find a combinatorial proof.
But I was not able to find a combinatorial proof even for the case considered in the present article.

Notice, that establishing our limits we use a fast oscillating function. It is interesting, that in the proof of Theorem~\ref{th_comb1}
the authors use a fast oscillating integrals. So, it may happens that there exists more deep relation between our calculations and
results of \cite{P1, P2, Pemantle_preprint}

\end{document}